\newtheorem*{rep@theorem}{\rep@title}
\newcommand{\newreptheorem}[2]{%
\newenvironment{rep#1}[1]{%
 \def\rep@title{#2 \ref{##1}}%
 \begin{rep@theorem}}%
 {\end{rep@theorem}}}
\newtheorem{thmx}{Theorem}
\theoremstyle{plain}
\newtheorem{teor}{Theorem}[section]
\newtheorem{lem}[teor]{Lemma}
\newtheorem{prop}[teor]{Proposition}
\newtheorem{setup}[teor]{Setup}
\theoremstyle{definition}
\newtheorem{deft}[teor]{Definition}
\theoremstyle{remark}
\newtheorem{oss}[teor]{Remark}
\DeclareMathOperator\upL{\textup{L}}
\DeclareMathOperator\bbC{\mathbb{C}}
\DeclareMathOperator\bbH{\mathbb{H}}
\DeclareMathOperator\bbN{\mathbb{N}}
\DeclareMathOperator\bbP{\mathbb{P}}
\DeclareMathOperator\bbR{\mathbb{R}}
\DeclareMathOperator\bbS{\mathbb{S}}
\begin{document}

\title[Trivializability for rank-one cocycles]{On the trivializability of rank-one cocycles with an invariant field of projective measures}

\author[A. Savini]{A. Savini}
\address{Dipartimento di Matematica, Politecnico di Milano, Piazza Leonardo da Vinci 32, 20133, Milano, Italia}
\email{alessio.savini@polimi.it}

\keywords{hyperbolic lattice, measurable cocycle, algebraic representability, metric ergodicity, projective measure, compatibility}
\subjclass[2020]{Primary: 22E40, 22F10, Secondary: 37A20}
\date{\today.\ \copyright{\ The author was partially supported by the SNSF grant no. 200020-192216.}}

\begin{abstract}
Let $G$ be $\textup{SO}^\circ(n,1)$ for $n \geq 3$ and consider a lattice $\Gamma < G$. Given a standard Borel probability $\Gamma$-space $(\Omega,\mu)$, consider a measurable cocycle $\sigma:\Gamma \times \Omega \rightarrow \mathbf{H}(\kappa)$, where $\mathbf{H}$ is a connected algebraic $\kappa$-group over a local field $\kappa$. Under the assumption of compatibility between $G$ and the pair $(\mathbf{H},\kappa)$, we show that if $\sigma$ admits an equivariant field of probability measures on a suitable projective space, then $\sigma$ is trivializable. 

An analogous result holds in the complex hyperbolic case. 
\end{abstract}
  
\maketitle

\section{Introduction}

A lattice $\Gamma$ in a locally compact second countable group $G$ is \emph{rigid} if, roughly speaking, its isomorphism class boils down to its conjugacy class. The pioneering works by Mostow \cite{mostow68:articolo,Most73} and Prasad \cite{prasad:articolo} showed that any lattice in a semi-simple Lie group without factors either compact or locally isomorphic to $\textup{SL}(2,\bbR)$ are rigid. Later Margulis \cite{margulis:super} strengthened Mostow's Theorem by showing that any unbounded Zariski dense representation of a higher rank irreducible lattice into an adjoint semi-simple Lie group can be actually extended to the ambient group. Such phenomenon, called \emph{superrigidity}, was exploited by Margulis to give an arithmeticity criterion for higher rank irreducible lattices. 

After those outstanding results, the interest of the mathematical community in rigidity of lattices grew rapidly and nowadays rigidity has become an independent field of research. For instance, the functorial characterization of bounded cohomology due to Burger and Monod \cite{BM99,burger2:articolo} sheds a new light on the theory of \emph{maximal representations}, where the word maximal usually refers to the maximality of a numerical invariant defined in terms of the bounded cohomology of the groups involved. Maximal representations have been widely studied in the case of both real and complex hyperbolic lattices by a long list of authors \cite{iozzi02:articolo,BIW1,bucher2:articolo,Pozzetti,BBIborel}.

More recently, Bader, Fisher, Miller and Stover \cite{BFMS21,BFMS:Unpub} have exploited the theory of algebraic representability of ergodic actions \cite{BF22} to obtain two superrigidity results for Zariski dense representations of either real or complex hyperbolic lattices. In those cases the additional hypothesis is the existence of suitable invariant probability measure on a given projective bundle. Following the spirit of Margulis' Arithmeticity Theorem, the authors applied their superrigidity statements to characterize a family of arithmetic lattices, namely those admitting an infinite family of maximal totally geodesic submanifolds. 

Moving out from the world of representations, an analogous result to Margulis' Superrigidity Theorem was formulated by Zimmer \cite{zimmer:annals} in the context of \emph{measurable cocycles}. A measurable cocycle is Borel measurable map that can be thought of as a twisted representation, where the twist depends on some parameter space. Zimmer proved that any ergodic Zariski dense measurable cocycle associated to a higher rank irreducible lattice must be \emph{superrigid} (here we will say \emph{trivializable}), that is it can be actually untwisted to a representation that depends no more on the additional parameter space. Zimmer studied such superrigidity phenomenon because measurable cocycles appear naturally in the context of dynamical systems, for instance in the case of either measure equivalence or orbit equivalence. 

After the seminal work by Zimmer, several results about trivializability of cocycles appeared in literature. Monod-Shalom \cite{MonShal0} proved a superrigidity statement when the target is the isometry group of a generalized negatively curved space. Trivializability of cocycles were exploited to prove either orbit or measure equivalence rigidity theorems by Furman \cite{furman99,furman99:2} for higher rank lattices and by Bader-Furman-Sauer \cite{sauer:articolo} for real hyperbolic lattices with an additional integrability condition. Following the spirit of maximal representations, the author, together with Moraschini and Sarti, has recently introduced the notion of \emph{maximal cocycles} \cite{moraschini:savini,moraschini:savini:2,SS21,sarti:savini,sarti:savini:2,Savini2020,savini3:articolo} and proved several rigidity results analogous to the ones valid for representations. 

In this manuscript we want to prove two superrigidity results for cocycles similar to the ones proved by Bader, Fisher, Miller and Stover for representations. As suggested by the authors themselves in the introduction of \cite{BFMS21}, they were already suspecting such results to be true, at least in the real case. Here we want to prove those statements explicitly and to underline the main differences with respect to the case of representations. An additional purpose of the author is to stress the importance of algebraic representability of ergodic actions in the proof of such theorems. 

The first superrigidity statement is about real hyperbolic lattices. For the technical assumption about compatibility we refer the reader to Section \ref{sec:compatibility} for a precise definition. 

\begin{thmx}\label{teor:real:case}
Let $G=\textup{SO}^\circ(n,1)$ for $n \geq 3$ and consider a lattice $\Gamma < G$. Consider a simple non-compact group $W < G$ and an ergodic standard Borel probability $\Gamma$-space $(\Omega,\mu)$. Suppose that $\kappa$ is a local field and $\mathbf{H}$ is a connected algebraic $\kappa$-group such that $(\mathbf{H},\kappa)$ is compatible with $G$. Denote by $H=\mathbf{H}(\kappa)$ the $\kappa$-points of $\mathbf{H}$. Consider a measurable cocycle $\sigma:\Gamma \times \Omega \rightarrow H$ which is Zariski dense and has unbounded image. Suppose additionally that there exist an irreducible representation of $\mathbf{H}$ on a $\kappa$-vector space $V$ and a probability-valued measurable map $\Phi:G \times \Omega \rightarrow \mathcal{M}^1(\mathbb{P}(V))$ which is $\sigma$-equivariant and $W$-invariant with respect to the first variable. Then $\sigma$ is trivializable.
\end{thmx}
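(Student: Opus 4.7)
The plan is to adapt the representation-theoretic argument of Bader, Fisher, Miller and Stover \cite{BFMS21} to the cocycle setting, with the algebraic representability of ergodic actions from \cite{BF22} playing the role that the algebraic hull plays in \cite{BFMS21}. First, I would reformulate the data: the $W$-invariance of $\Phi$ in the first variable lets one descend to a measurable map $\overline{\Phi}\colon W\backslash G \times \Omega \to \mathcal{M}^1(\mathbb{P}(V))$, and the $\sigma$-equivariance makes $\overline{\Phi}$ a $\Gamma$-equivariant map between $\Gamma$-spaces (with $\Gamma$ acting on $W\backslash G$ by right translation and on $\Omega$ by the given action). Since $W$ is simple non-compact, Howe--Moore gives ergodicity of the $\Gamma$-action on $W\backslash G$; combined with the ergodicity of $(\Omega,\mu)$, the product $\Gamma$-action is ergodic.

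Next, I would apply algebraic representability. The theorem of Bader and Furman \cite{BF22}, applied to $\sigma$ together with the $\sigma$-equivariant measure-valued map $\overline{\Phi}$, should produce an algebraic gauge: a $\kappa$-subgroup $\mathbf{L}\leq\mathbf{H}$ --- morally the stabilizer in $\mathbf{H}$ of a generic measure lying in the image of $\overline{\Phi}$ --- and a $\sigma$-equivariant measurable map $\psi\colon W\backslash G\times\Omega \to \mathbf{H}(\kappa)/\mathbf{L}(\kappa)$ whose composition with the natural orbit map recovers $\overline{\Phi}$. The Zariski density and unboundedness of $\sigma$ prevent the gauge from collapsing trivially, while the irreducibility of $V$ controls the shape of $\mathbf{L}$ via a Furstenberg-type lemma.

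The crux is then the compatibility assumption. Compatibility between $(\mathbf{H},\kappa)$ and $G$ is formulated precisely to rule out nontrivial $\sigma$-equivariant measurable maps out of $W\backslash G$ into $\mathbf{H}(\kappa)/\mathbf{L}(\kappa)$ arising from irreducible $\mathbf{H}$-representations; it therefore forces $\psi$ to be essentially constant along the $W\backslash G$-factor, so that $\psi$ descends to a $\sigma$-equivariant measurable map $\Omega \to \mathbf{H}(\kappa)/\mathbf{L}(\kappa)$. A measurable selection, combined once more with Zariski density, then upgrades this to a measurable map $f\colon\Omega\to H$ that realises a cohomology between $\sigma$ and a homomorphism $\rho\colon\Gamma\to H$, which is precisely trivializability.

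The hard part will be the third step: turning the static, group-theoretic compatibility condition into a genuine dynamical rigidity statement in the presence of the parameter space $(\Omega,\mu)$. Compatibility, the irreducibility of $V$, the Zariski density and unbounded image of $\sigma$, and the ergodicity hypotheses must be balanced so that on the one hand $\mathbf{L}$ does not collapse artificially before this step, and on the other hand $\psi$ genuinely descends to $\Omega$. Once this is in place, the complex-hyperbolic case announced in the abstract should follow by the same scheme, replacing $W$ with the appropriate simple non-compact subgroup adapted to $\textup{SU}(n,1)$.
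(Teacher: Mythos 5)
Your step 3 is where the argument breaks, and it breaks for a structural reason: you have the logic of the paper inverted. Compatibility (Definition \ref{def:compatibility}) is a purely group-theoretic condition on continuous homomorphisms $\tau\colon P\to (N_{\mathbf{H}}(\mathbf{J})/\mathbf{J})(\kappa)$ from the minimal parabolic --- namely that $\overline{\tau(U')}^Z=\overline{\tau(U)}^Z$ for every nontrivial $U'<U$ --- not a condition that ``rules out nontrivial $\sigma$-equivariant maps out of $W\backslash G$.'' In the paper it is used only to prove that the subgroup $\mathbf{J}$ of the initial \emph{coset algebraic representation} is trivial: the measure-valued map yields (via smoothness of the $H$-action on $\mathcal{M}^1(\bbP(V))$ and metric ergodicity, Theorem \ref{teor:equivariant:map:real:case}) a $W$-invariant $\sigma$-equivariant map into $(\mathbf{H}/\mathbf{L})(\kappa)$; by initiality it factors through $(\mathbf{H}/\mathbf{J})(\kappa)$, its $U'$-invariance ($U'=U\cap W$) plus compatibility upgrades it to $U$-invariance, hence $\langle W,U\rangle=G$-invariance, and the resulting $\sigma$-equivariant map $\Omega\to(\mathbf{H}/\mathbf{L})(\kappa)$ \emph{contradicts} Zariski density of $\sigma$. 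So a map that ``descends to $\Omega$'' with values in $H/\mathbf{L}(\kappa)$, $\mathbf{L}$ proper, is the absurdity one aims at, not a step toward the conclusion: by measurable selection it would only show $\sigma$ cohomologous to a cocycle into a proper subgroup (impossible by Zariski density), and in no case can it produce a homomorphism $\rho\colon\Gamma\to H$ independent of the $\Omega$-variable. Your proposal therefore has no mechanism that actually untwists the cocycle.

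The missing mechanism is the algebraic representability of the $(\Gamma\times T)$-action on $G\times\Omega$ (with $T$ acting by right translation on the $G$-factor), which you never set up. The paper takes $T=U'=U\cap W$, gets the initial coset representation $(\mathbf{J},\tau,\Psi)$ with $\Psi\colon G\times\Omega\to(\mathbf{H}/\mathbf{J})(\kappa)$ (Lemma \ref{lem:algebraic:representation}), extends $\tau$ from $U'$ to $P$ and then from $A$ to $N_G(A)$ while keeping the same $\Psi$ (Lemma \ref{lem:same:map}), proves $\mathbf{J}=\{e\}$ as above (this is the \emph{only} place compatibility and the measure map enter), and finally, since $\langle P,N_G(A)\rangle=G$, applies the gluing lemma of \cite{BF22} to the slices $\Psi_s\colon G\to H$ to obtain a single continuous homomorphism $\Upsilon\colon G\to H$ independent of $s$; setting $\varphi(s):=\Psi(g,s)\Upsilon(g)^{-1}$ yields $\Upsilon(\gamma)=\varphi(\gamma.s)^{-1}\sigma(\gamma,s)\varphi(s)$, which is the trivialization. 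A smaller but real gap in your step 1: ergodicity of the $\Gamma$-actions on $W\backslash G$ and on $\Omega$ separately does not give ergodicity of the product; one needs \emph{metric} ergodicity of $\Gamma\curvearrowright W\backslash G$ (from \cite{Bader:Gelander}, not Howe--Moore, which concerns the $G$-action) to conclude ergodicity of $W\backslash G\times\Omega$.
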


We have a similar statement that holds for complex hyperbolic lattices. 

\begin{thmx}\label{teor:complex:case}
Let $G=\textup{SU}(n,1)$ for $n \geq 2$ and consider a lattice $\Gamma < G$. Consider a simple non-compact group $W < G$ and an ergodic standard Borel probability $\Gamma$-space $(\Omega,\mu)$. Suppose that $\kappa$ is a local field and $\mathbf{H}$ is a connected algebraic $\kappa$-group. Denote by $H=\mathbf{H}(\kappa)$ the $\kappa$-points of $\mathbf{H}$. Consider a measurable cocycle $\sigma:\Gamma \times \Omega \rightarrow H$ which is Zariski dense and has unbounded image. Suppose additionally that there exist an irreducible representation of $\mathbf{H}$ on a $\kappa$-vector space $V$ and a probability-valued measurable map $\Phi:G \times \Omega \rightarrow \mathcal{M}^1(\mathbb{P}(V))$ which is $\sigma$-equivariant and $W$-invariant with respect to the first variable. If we have that either
\begin{enumerate}
\item the pair $(\mathbf{H},\kappa)$ is compatible, 
\item the field is given by $\kappa=\bbR$ and $\mathbf{H}(\kappa)=\textup{PU}(n,1)$,
\end{enumerate}
then $\sigma$ is trivializable.
\end{thmx}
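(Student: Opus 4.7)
My plan is to parallel the argument for Theorem~\ref{teor:real:case}, promoting the measure-valued equivariant map $\Phi$ to an algebraic datum and then using the $W$-invariance, ergodicity of $\Omega$ and Zariski density of $\sigma$ to force that datum to be essentially constant on the $\Omega$-factor. Concretely, I would first apply the Bader--Furman theory of algebraic representability of ergodic actions \cite{BF22} to replace $\Phi$ by a measurable $\sigma$-equivariant map $\Psi: G \times \Omega \rightarrow (\mathbf{H}/\mathbf{L})(\kappa)$ landing in an $\mathbf{H}$-orbit inside $\mathbb{P}(V)$, where $\mathbf{L}<\mathbf{H}$ is a $\kappa$-algebraic subgroup whose conjugacy class records the algebraic hull of the support of $\Phi$. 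The irreducibility of the representation on $V$ together with Zariski density of $\sigma$ would then force $\mathbf{L}$ to be a proper (in fact, parabolic-type) subgroup of $\mathbf{H}$, and the unboundedness assumption would prevent $\Psi$ from taking values in a compact $H$-invariant subvariety. The $W$-invariance in the first variable implies that $\Psi$ factors through $W\backslash G \times \Omega$; combined with Howe--Moore-type ergodicity for the $W$-action on the relevant $G$-boundary and with ergodicity of the $\Gamma$-action on $\Omega$, this factorization should produce a measurable cochain $f:\Omega \rightarrow H$ that trivializes $\sigma$.

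Under hypothesis~(1), compatibility of $(\mathbf{H},\kappa)$ with $G$ feeds exactly into the step where the algebraic variety $\mathbf{H}/\mathbf{L}$ must pair correctly with the $G$-boundary, so this case should follow by a direct adaptation of the proof of Theorem~\ref{teor:real:case}; the only points requiring explicit verification are that the chosen simple non-compact $W<\textup{SU}(n,1)$ still enjoys the Mautner property on the relevant $G$-boundary and that the induction from $\Gamma$ to $G$ is unaffected by the change of ambient group from $\textup{SO}^\circ(n,1)$ to $\textup{SU}(n,1)$.

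Under hypothesis~(2), where $H=\textup{PU}(n,1)$ and $\kappa=\bbR$, the abstract compatibility input is unavailable and must be replaced by an explicit geometric argument: the relevant $\mathbf{H}$-equivariant quotient of $\mathbb{P}(V)$ is (an orbit of) the boundary $\partial \bbH^n_\bbC$ of complex hyperbolic space, and one has to show by hand that the resulting $\sigma$-equivariant, $W$-invariant boundary map $G/W \times \Omega \rightarrow \partial \bbH^n_\bbC$ is essentially constant in $\Omega$. I expect this to be the main obstacle, since no generic compatibility statement from Section~\ref{sec:compatibility} can be invoked. The strategy I would pursue is to mimic the complex-hyperbolic superrigidity argument of \cite{BFMS:Unpub} for representations, combining $W$-invariance (with $W$ chosen, say, as a copy of $\textup{SU}(n-1,1)$ or $\textup{SU}(1,1)$ inside $\textup{SU}(n,1)$) with the unboundedness of $\sigma$ to rule out degenerate configurations of the boundary map, and then using ergodicity to descend to the trivializing cochain. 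Verifying that the algebraic-representability output really lands in $\partial \bbH^n_\bbC$ rather than in some strictly larger $\textup{PU}(n,1)$-orbit inside $\mathbb{P}(V)$, and that the geometric boundary map so obtained matches the $G$-boundary theory used for the source $\textup{SU}(n,1)$, are the technical points I expect to require the most care.
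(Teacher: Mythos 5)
Your treatment of case (1) is essentially the paper's: when $(\mathbf{H},\kappa)$ is compatible one runs the real-case argument verbatim (smoothness of the $H$-action on $\mathcal{M}^1(\bbP(V))$ plus metric ergodicity of $\Gamma \curvearrowright W\backslash G$ to get the map to $(\mathbf{H}/\mathbf{L})(\kappa)$, then the $U'$- and $P$-algebraic representations with $\mathbf{J}$ forced to be trivial, and finally the gluing of the two morphisms via \cite[Lemma 5.1]{BF22}), so that half of your plan is fine, even if you misattribute the first step to algebraic representability rather than to smoothness, and the claim that irreducibility forces $\mathbf{L}$ to be of parabolic type plays no role.

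For case (2), however, there is a genuine gap, and in fact your stated target is the wrong one. When $\mathbf{J}$ is non-trivial the paper first invokes \cite[Proposition 3.4]{BFMS:Unpub} to pin down the situation completely: $\mathbf{J}$ is the center of the unipotent radical of a minimal parabolic $Q<\textup{PU}(n,1)$ and $\ker(\tau)$ is one-dimensional; you acknowledge that identifying the target orbit is delicate but give no mechanism for it. More seriously, the resulting $(\Gamma\times P)$-equivariant map is \emph{not} $W$-invariant and one should not try to prove it essentially constant in $\Omega$ -- that is false in general (a genuinely twisted Zariski dense cocycle satisfies the hypotheses), and constancy would amount to the cocycle already being a representation. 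What the paper does instead (Proposition \ref{prop:rigidity:boundary:map}, following \cite{SS21} rather than the representation argument of \cite{BFMS:Unpub}) is to project to $H/Q$ to get a boundary map $\widehat{\phi}:\partial_\infty \bbH^n_{\bbC}\times\Omega\rightarrow \partial_\infty \bbH^n_{\bbC}$, prove that almost every slice $\widehat{\phi}_s$ is Zariski dense (Proposition \ref{prop:slice:zariski:dense}), essentially injective, and \emph{chain preserving} (using $\tau(P\cap D)=\theta(P\cap D)$ and the identification $\mathcal{C}_1\cong (P\cap D)\backslash G$), apply Pozzetti's theorem \cite[Theorem 1.6]{Pozzetti} to conclude each slice is rational, and then use smoothness of the $(\textup{SU}(n,1)\times\textup{PU}(n,1))$-action on the space of rational boundary maps (Proposition \ref{prop:smooth:action:rational}) together with ergodicity and Borel density to show that the family $\{\widehat{\phi}_s\}_{s\in\Omega}$ lies in a single $\textup{PU}(n,1)$-orbit; the element moving a fixed rational map $\phi_0$ to $\widehat{\phi}_s$ is precisely the cochain $\varphi(s)$ that untwists $\sigma$. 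Your proposal contains neither the chain-geometry/rationality step nor this orbit argument, which are the essential new ingredients in the complex case.
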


The proof of Theorem \ref{teor:real:case} is similar to the one given for representations, whereas the case of Theorem \ref{teor:complex:case} requires more work, since one needs to study carefully the slices of an equivariant measurable map between boundaries which appears naturally in the proof. In this context, we first apply the theory by Pozzetti \cite{Pozzetti} of chain preserving Borel maps to prove that the slices are rational. Then we apply the smoothness of the action on the space of rational maps, already used by Zimmer \cite{zimmer:libro} and the author together with Sarti \cite{SS21}, to get the statement. 

We conclude the paper by proving a partial existence result for the measurable map required by both theorems when the lattice admits on its associated homogeneous space an infinite sequence of probability measures weak-$^\ast$ converging to the Haar one. We finally relate Theorem \ref{teor:real:case} and Theorem \ref{teor:complex:case} to the current literature about maximal cocycles. For instance, we can view them as a way to weaken the hypothesis of integrability in the theorems by either Fisher-Hitchman \cite{fisher:hitchman} or Bader-Furman-Sauer \cite{sauer:articolo}.

\subsection*{Acknowlegdements} I would like to thank the anonymous referee for the precious suggestions and comments which allowed me to improve the quality of this paper.

\subsection*{Plan of the paper} The first part of the manuscript is devoted to recall all the definitions that we need throughout the paper. In Section \ref{sec:hyperbolic:groups} we review the basics about the groups $\textup{SO}^\circ(n,1)$ and $\textup{SU}(n,1)$, their boundaries and we conclude with the notion of compatibility. Section \ref{sec:ergodic:actions} is devoted mainly to actions of algebraic groups on algebraic varieties. In Section \ref{sec:smoothness} we give the definition of smooth action, in Section \ref{sec:ergodicity} we move to the ones of ergodic and metrically ergodic action. We conclude with Section \ref{sec:representability:actions} recalling the notion of algebraic representability of an ergodic action.  In Section \ref{sec:boundary:theory} we give some details about measurable cocycles theory and about boundary theory. 

The proofs of Theorems \ref{teor:real:case} and \ref{teor:complex:case} can be found in Section \ref{sec:real:case} and Section \ref{sec:complex:case}, respectively. In Section \ref{sec:existence:map} we investigate about the existence of the measurable map required by the hypothesis of Theorems \ref{teor:real:case} and \ref{teor:complex:case}. We conclude with some comments and remarks about the relation between our results and the current literature. 

\section{The groups $\textup{SO}^\circ(n,1)$ and $\textup{SU}(n,1)$} \label{sec:hyperbolic:groups}

\subsection{Totally geodesic subspaces and chains}\label{sec:standard:subgroups}

In this section we recall the main properties of the group $\textup{SO}^\circ(n,1)$ and $\textup{SU}(n,1)$. We mainly refer to \cite[Section 2, Section 3]{BFMS:Unpub} for a more detailed description. 

We define the group $\textup{U}(n,1)$ as the group of matrices in $\textup{GL}(n+1,\bbC)$ preserving the Hermitian form 
\begin{equation}\label{eq:hermitian:form}
\widetilde{q}_c:\bbC^{n+1} \rightarrow \bbR \ , \ \ \widetilde{q}_c(x_1,\cdots,x_{n+1}):=\sum_{i=1}^n |x_i|^2 - |x_{n+1}|^2 \ .
\end{equation}
The group $\textup{SU}(n,1)$ is the subgroup of $\textup{U}(n,1)$ whose elements are matrices having determinant equal to one. We are going to write $G_c=\textup{SU}(n,1)$. 

If we restrict the form $\widetilde{q}_c$ to $\bbR^{n+1}$, we obtain a quadratic form $\widetilde{q}_r$ over the reals. We can define analogously the group $\textup{O}(n,1)$ as the subgroup of $\textup{GL}(n+1,\bbR)$ preserving $\widetilde{q}_r$. In a similar way, we can define $\textup{SO}(n,1)$ as the subgroup of $\textup{O}(n,1)$ whose matrices have determinant one. Notice that $\textup{SU}(n,1)$ is connected, whereas $\textup{SO}(n,1)$ has two connected components. We denote by $G_r=\textup{SO}^\circ(n,1)$ the connected component of the identity. By the definition, it is clear that $G_r < G_c$. 

By applying a linear change of coordinates to Equation \eqref{eq:hermitian:form}, one can rewrite the form $\widetilde{q}_c$ (and consequently the form $\widetilde{q}_r$) as follows
$$
q_c(x_1,\cdots,x_{n+1})=2\textup{Re}(x_1\overline{x}_{n+1})+\sum_{i=2}^n |x_i|^2 \ .
$$
Let $\mathcal{E}=\{e_1, \cdots , e_{n+1}\}$ be the canonical basis of $\bbC^{n+1}$. We have that the line generated by the vector $e_1 - e_{n+1}$ is negative definite and the stabilizers of such a line are given by
$$
K_c=\textup{Stab}_{G_c}(e_1-e_{n+1})\cong \textup{U}(n) \ , \ \ K_r=\textup{Stab}_{G_r}(e_1-e_{n+1})=\textup{SO}(n) \ .
$$
The group $K_c$ (respectively $K_r$) is a maximal compact subgroup of $G_c$ (respectively $G_r$) and we can identify the quotient $K_c\backslash G_c \cong \bbH^n_{\bbC}$ with the \emph{complex hyperbolic space} (respectively $K_r\backslash G_r\cong \bbH^n_{\bbR}$ with the \emph{real hyperbolic space}). 

Following \cite{BFMS:Unpub}, for $1 \leq m \leq n$ we denote by $W_c^m\leq G_c$ the subgroup fixing the vectors $e_{m+1},\cdots,e_n$ and similarly we consider $W_r^m = W_c^m \cap G_r$. It is immediate to verify that we have the following isomorphisms 
$$
W_c^m \cong \textup{SU}(m,1) \ , \ \ W_r^m \cong \textup{SO}^\circ(m,1) \ .
$$

For $1 \leq m \leq n$, the subgroups $W_r^m<G_r$ and $W_r^m,W_c^m <G_c$ are called \emph{standard} subgroups. By \cite[Proposition 2.4]{BFMS:Unpub}, the standard subgroups of both $G_r$ and $G_c$ are non-compact, connected, almost simple, closed and generated by unipotent elements. Additionally any other non-compact, connected, almost simple, closed subgroup of either $G_r$ or $G_c$ generated by unipotent elements must be conjugate to a unique standard subgroup. The importance of standard subgroups relies on the fact that they completely characterize totally geodesic subspaces of either $\bbH^n_{\bbR}$ or $\bbH^n_{\bbC}$. More precisely by \cite[Proposition 2.3]{BFMS:Unpub} any totally geodesic submanifold of $\bbH^n_{\bbC}=K_c \backslash G_c$ (respectively of $\bbH^n_{\bbR}=K_r \backslash G_r$) with real dimension greater than or equal to $2$ must be of the form $K\backslash KWg$, where $K$ is the maximal compact subgroup $K_c$ (respectively $K_r$), $W$ is a standard subgroup and $g$ is an element of the group $G_c$ (respectively $G_r$). 

We conclude this short overview about hyperbolic spaces talking about the \emph{chain geometry} of the boundary at infinity $\partial_\infty \bbH^n_{\bbC}$. The latter can be identified with the boundary sphere $\bbS^{2n-1}$ in the ball model. We call a $m$-\emph{plane} of $\bbH^n_{\bbC}$ a totally geodesic holomorphic subspace associated to a standard subgroup conjugated to $W^m_c<G_c$. A $1$-plane is a \emph{complex geodesic}. Similarly, a $k$-\emph{chain} is the boundary of a $k$-plane viewed as a subset of the boundary at infinity $\partial_\infty \bbH^n_{\bbC}$. When $k=1$, we simply refer to \emph{chains}. Two distinct points $\xi,\eta \in \partial_\infty \bbH^n_{\bbC}$ determine a unique chain $C_{\xi,\eta}$ and two chains can be either disjoint or meet exactly in one point. We denote by $\mathcal{C}_1$ the (homogeneous) space parametrizing the pair $(C,\eta)$, where $C$ is a chain and $\eta \in C$. If we denote by $(\bbH^n_{\bbC})^{(2)}$ the set of pairs of distinct points, it should be clear that there exists a natural map
$$
\pi:(\bbH^n_{\bbC})^{(2)} \rightarrow \mathcal{C}_1 \ , \ \  \pi(\xi,\eta)=(C_{\xi,\eta},\xi) \ .
$$

\begin{deft}\label{def:chain:preserving}
Let $\varphi:\partial_\infty \bbH^n_{\bbC} \rightarrow \partial_\infty \bbH^n_{\bbC}$ be a Borel measurable map. We say that the map $\varphi$ is \emph{chain preserving} if it is essentially injective (and hence it induces a well-defined measurable map $\varphi^{(2)}$ on $(\bbH^n_{\bbC})^{(2)}$) and there exists map $\psi:\mathcal{C}_1 \rightarrow \mathcal{C}_1$ such that the following diagram commutes
$$
\xymatrix{
(\bbH^n_{\bbC})^{(2)} \ar[rr]^{\varphi^{(2)}} \ar[d]^\pi && (\bbH^n_{\bbC})^{(2)} \ar[d]^\pi \\
\mathcal{C}_1 \ar[rr]^\psi && \mathcal{C}_1 \ . 
}
$$ 
\end{deft}

Notice that the previous definition is a slight variation of the original definition of chain preserving map given by Pozzetti \cite{Pozzetti}. Our definition implies that the map $\varphi$ sends chains to chains and its restriction to a chain is well-defined for almost every chain, thus it implies that $\varphi$ is chain preserving in the sense of Pozzetti.  

\subsection{Parabolic subgroups and compatibility}\label{sec:compatibility}
In this section we are going to recall most of the subgroups of $\textup{SU}(n,1)$ that we will need in the proof of our Theorem \ref{teor:complex:case}. We are going to recall also the notion of compatibility that will be crucial for all along the paper. We mainly refer the reader to \cite{BFMS21,BFMS:Unpub} for a more detailed discussion. 

We retain the notation of Section \ref{sec:standard:subgroups}. We denote by $P<G_c$ the stabilizer of the line spanned by $e_1$. It is a \emph{(minimal) parabolic subgroup}. Since the rank of the group $G_c$ is equal to one, all the (minimal) parabolic subgroups are actually conjugate. 

By considering the Siegel model of $\bbH^n_{\bbC}$, we can suppose that $e_1$ coincides with the point at infinity. We denote by $MA$ the stabilizer of the pair of points $(\infty,0)$ in the boundary of the Siegel model. Here $M<K_c$ is the subgroup of $K_c$ preserving such pair, whereas $A<P$ is a maximal abelian subgroup isomorphic to $\bbR^\ast$. Notice that we have an identification $$MA \backslash G_c \cong (\partial_\infty \bbH^n_{\bbC})^{(2)} \ . $$ 

If we define $U<P$ as the unipotent radical of $P$, it is well-known that it is isormorphic to the real $(2n-1)$-dimensional Heisenberg group. In particular, it is a two step nilpotent group with center $Z \cong \bbR$. Let $D<G_c$ the stabilizer of the plane generated by $e_1,e_{n+1}$. By \cite[Lemma 2.4]{BFMS:Unpub} we have that 
$$
P \cap D \cong MAZ \ ,
$$
and we have an identification $$ P \cap D \backslash G_c \cong \mathcal{C}_1 \ , $$ where $\mathcal{C}_1$ is the space of pointed chains introduced in Section \ref{sec:standard:subgroups}. 

Let $C<M$ be the subgroup of scalar matrices (isomorphic to the group of the roots of unity). By \cite[Proposition 2.12]{BFMS:Unpub} any continuous homormophism $\tau:P \rightarrow \overline{P}:=P/CZ$ with one-dimensional kernel must be surjective, $\ker(\tau)=CZ$ and, up to precomposing with an inner automorphism of $P$, we can suppose that $\tau(P\cap D)=\theta(P \cap D)$, where $\theta:P \rightarrow \overline{P}$ is the standard quotient projection. We will exploit this characterization in the proof of Theorem \ref{teor:complex:case}. 

We conclude the section with the definition of compatibility. 

\begin{deft}\label{def:compatibility}
Let $\kappa$ be a local field and let $\mathbf{H}$ be a $\kappa$-algebraic group. Consider $P$ a minimal parabolic subgroup of either $G_r$ or $G_c$. Let $U<P$ be the unipotent radical. We say that the pair $(\mathbf{H},\kappa)$ is \emph{compatible} with either $G_r$ or $G_c$ if for every non-trivial algebraic $\kappa$-subgroup $\mathbf{J}<\mathbf{H}$ and every continuous homormophism $\tau:P \rightarrow (N_{\mathbf{H}}(\mathbf{J})/\mathbf{J})(\kappa)$, we have that the Zariski closure of $\tau(U')$ coincides with the Zariski closure of $\tau(U)$, for every non-trival subgroup $U'<U$. 
\end{deft}

As noticed by Bader, Fisher, Miller, Stover \cite{BFMS21} compatibility is satisfied if the field is non-archimedean, because in this case $(N_{\mathbf{H}}(\mathbf{J})/\mathbf{J})(\kappa)$ is totally disconnected. Given a lattice $\Gamma<G_r$, if $\kappa$ is a local field extending the adjoint trace field $\ell$ of $\Gamma$ and $\mathbf{H}$ is the adjoint group of $\mathbf{G}(\kappa)$, where $\mathbf{G}$ is the $\ell$-group \cite{Vin71} containing $\Gamma$ (up to passing to a finite index subgroup), the pair $(\mathbf{H},\kappa)$ is compatible with $G_r$ \cite[Lemma 3.5]{BFMS21}. In the complex case, the same authors \cite[Proposition 3.4]{BFMS:Unpub} prove that $(\mathrm{PU}(r,s),\mathbb{R})$ is compatible with $G_c=\mathrm{SU}(n,1)$ when either $r+s<n+1$ or $s>1$. 

Here compatibility will be a crucial ingredient to show our trivializability statements. 

\section{Ergodic actions and algebraic varieties} \label{sec:ergodic:actions}

\subsection{Smooth actions} \label{sec:smoothness}

In this section we are going to recall the notion of smooth action. We will stress its crucial role in the study of actions of algebraic groups on either spaces of probability measures or spaces of rational functions. We mainly refer the reader to \cite[Chapter 2]{zimmer:libro}, \cite[Section 2.4]{SS21} for a more detailed discussion.  

We need first to recall the notion of countably separated space. A Borel space $(X,\mathcal{B})$ is \emph{countably separated} if there exists a countable family of Borel subsets $\{B_i \in \mathcal{B}\}_{i \in I}$ which separates points.

\begin{deft}\label{def:smooth:action}
Let $G$ be a locally compact group acting on a Borel space $(X,\mathcal{B})$. We say that the $G$-action is \emph{smooth} if the quotient Borel structure on $X/G$ is countably separated. 
\end{deft}

An example of smooth action is the one given by an algebraic group acting on algebraic variety. In our manuscript will be mainly interested in two different cases. The first one is an action of an algebraic group on the space of probability measures over a projective space. Before giving the details we are going to fix once and for all the following notation: if $X$ is a locally compact space, we denote by $\mathcal{M}^1(X)$ the space of all probability measures on it. Given a local field $\kappa$ and a finite dimensional vector $\kappa$-space $V$, we have a natural action of $\textup{PSL}(V)$ on the space $\mathcal{M}^1(\bbP(V))$ given by the pushforward construction, that is
$$
(g.\mu):=g_\ast \mu \ ,
$$
for every $g \in \textup{PSL}(V)$ and every $\mu \in \mathcal{M}^1(\bbP(V))$. 

\begin{prop}{\upshape \cite[Theorem 3.2.4, Theorem 3.2.6]{zimmer:libro}}\label{prop:stabilizer:measure}
Let $\kappa$ be a local field of characteristic zero and let $V$ be a $\kappa$-vector space of finite dimension. Then the $\textup{PSL}(V)$-action on the space $\mathcal{M}^1(\bbP(V))$ of probability measures is smooth. Additionally, if $\mu_0 \in \mathcal{M}^1(\bbP(V))$ is a probability measure, then the stabilizer $\textup{Stab}_{\textup{PSL}(V)}(\mu_0)$ is a compact extension of the $\kappa$-points of some $\kappa$-algebraic group.
\end{prop}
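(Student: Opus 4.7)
The plan is to induct on $\dim_\kappa V$ and exploit the classical Furstenberg dichotomy for measures on projective spaces. The base case $\dim V = 1$ is trivial, since $\bbP(V)$ is a point and the action factors through the trivial group. For the inductive step, the first move is to stratify $\mathcal{M}^1(\bbP(V))$ by the projective span of the support: for $\mu \in \mathcal{M}^1(\bbP(V))$, let $W_\mu \subseteq V$ be the smallest $\kappa$-subspace with $\supp(\mu) \subseteq \bbP(W_\mu)$. The assignment $\mu \mapsto W_\mu$ is Borel and $\textup{PSL}(V)$-equivariant into the (countable) disjoint union of Grassmannians of $V$, itself a smooth $\textup{PSL}(V)$-space with algebraic stabilizers. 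Hence the problem reduces to the two strata $\{W_\mu \subsetneq V\}$ and $\{W_\mu = V\}$.

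On the stratum where $W_\mu \subsetneq V$, the measure $\mu$ is supported on $\bbP(W_\mu)$. The stabilizer $\textup{Stab}_{\textup{PSL}(V)}(\mu)$ is then contained in the algebraic subgroup $\textup{Stab}_{\textup{PSL}(V)}(W_\mu)$, and its projection onto $\textup{PSL}(W_\mu)$ has image $\textup{Stab}_{\textup{PSL}(W_\mu)}(\mu)$ with algebraic kernel. The induction hypothesis applied to $W_\mu$ then yields both smoothness on this stratum and the compact-by-algebraic description of the stabilizer, lifted back to $\textup{PSL}(V)$. On the opposite stratum $W_\mu = V$, the core input is Furstenberg's lemma: given a sequence $g_n \in \textup{Stab}_{\textup{PSL}(V)}(\mu)$ going to infinity, the Cartan decomposition combined with the decay of singular-value ratios forces $g_n \cdot \mu$ to concentrate on a proper projective subspace of $V$, contradicting $g_n \mu = \mu$ and $W_\mu = V$, unless $\{g_n\}$ is precompact modulo the stabilizer of certain algebraic data (a flag obtained from the limits of normalized Cartan components). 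Thus $\textup{Stab}_{\textup{PSL}(V)}(\mu)$ is a compact extension of the $\kappa$-points of an algebraic subgroup, from which the local closedness of the orbit, and hence smoothness on this stratum, follows.

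The main obstacle is making the Furstenberg compactness argument uniform across Archimedean and non-Archimedean local fields: one must establish the appropriate Cartan-type decomposition $g = k_1 a k_2$ in $\textup{PSL}(V)$ over $\kappa$ and verify that divergent sequences are detected by singular-value gaps in a way that forces measure concentration on a proper projective subspace in both settings. Once these analytic ingredients are in place (exactly as in Chapter 3 of Zimmer's book), the two strata assemble into a countably separated quotient Borel structure on $\mathcal{M}^1(\bbP(V))/\textup{PSL}(V)$, completing the induction and proving both claims simultaneously.
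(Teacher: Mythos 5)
The paper itself offers no proof of this proposition; it is quoted directly from Zimmer's book (Theorems 3.2.4 and 3.2.6), so your sketch has to be judged against Zimmer's argument, and judged that way it has a genuine gap at the decisive point. On the stratum $W_\mu = V$ you claim that a divergent sequence $g_n \in \textup{Stab}_{\textup{PSL}(V)}(\mu)$ forces $g_n{\cdot}\mu$ to concentrate on a proper projective subspace, ``contradicting $g_n\mu=\mu$ and $W_\mu=V$''. That is not what Furstenberg's lemma gives: when $g_n \to \infty$, a subsequence converges, \emph{off} the projectivization of a kernel subspace, to a quasi-projective map with image in a proper subspace, so the correct conclusion is only that $\mu$ is supported on the union of \emph{two} proper subspaces. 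Such measures can have full projective span and non-compact stabilizer: for $V=\kappa^2$ and $\mu=\tfrac{1}{2}\bigl(\delta_{[e_1]}+\delta_{[e_2]}\bigr)$ one has $W_\mu=V$, yet the whole diagonal torus fixes $\mu$. So your dichotomy ``proper span versus full span'' is too coarse, and the contradiction you invoke is simply false on that stratum. Zimmer's proof instead analyzes the finite family of minimal proper subspaces carrying positive mass and the way the stabilizer permutes them; your parenthetical ``unless $\{g_n\}$ is precompact modulo the stabilizer of certain algebraic data'' is precisely where all of that work lives, and as written it concedes rather than closes the argument.

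Two further steps are asserted but not justified. First, smoothness does not follow from the stabilizer being compact-by-algebraic: nice stabilizers do not imply locally closed orbits (an irrational flow on the torus has trivial stabilizers and a badly non-separated orbit space), so countable separation of $\mathcal{M}^1(\bbP(V))/\textup{PSL}(V)$ needs its own argument about limits $g_n\mu\to\nu$, which Zimmer supplies and you do not. Second, on the stratum $W_\mu\subsetneq V$ the reduction is glossed: the kernel of $\textup{Stab}_{\textup{PSL}(V)}(W_\mu)\to \textup{PGL}(W_\mu)$ is a non-compact algebraic group, and you must actually check that an extension of a compact-by-algebraic group by such a kernel is again compact-by-algebraic, and that smoothness of the stratum follows from the equivariant map to the Grassmannian together with the inductive hypothesis fibrewise; neither is automatic. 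As it stands the proposal reproduces the general shape of Zimmer's Chapter 3 but fails exactly where the content is.
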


The second case that we are interested in is the smoothness of the joint action of two algebraic groups on the space of rational functions between algebraic varieties. We are not going to treat this subject in full generality, but we will focus our attention on a specific example. We denote by $G=\textup{SU}(n,1)$ and $H=\textup{PU}(n,1)$, for $n \geq 2$ (the assumption on $n$ is not necessary but it is exactly the same we have in our Theorem \ref{teor:complex:case}). It is well-known that both $G$ and $H$ can be viewed as the real points of their complexifications $\mathbf{G}=\textup{SL}(n+1,\bbC)$ and $\mathbf{H}=\textup{PSL}(n+1,\bbC)$, respectively. Additionally, there exist two parabolic subgroups $\mathbf{P}<\mathbf{G}$ and $\mathbf{Q}<\mathbf{H}$ such that $\partial_\infty \bbH^n_{\bbC}=(\mathbf{G}/\mathbf{P})(\bbR)=(\mathbf{H}/\mathbf{Q})(\bbR)$. We say that a selfmap of the boundary $\partial_\infty \bbH^n_{\bbC}$ is \emph{rational} if it is the restriction of a rational map between $\mathbf{G}/\mathbf{P}$ and $\mathbf{H}/\mathbf{Q}$. As a consequence we are allowed to speak about the set of rational functions $\Sigma:=\textup{Rat}(\partial_\infty \bbH^n_{\bbC},\partial_\infty \bbH^n_{\bbC})$. On the latter space we can define a joint $(G \times H)$-action given by 
$$
((g,h).\varphi)(\xi):=h\varphi(g^{-1}\xi) \ ,
$$
for every $g \in G, h \in H$ and $\varphi \in \Sigma$. We have the following

\begin{prop}{\upshape \cite[Proposition 3.3.2]{zimmer:libro}}\label{prop:smooth:action:rational}
Let $G=\textup{SU}(n,1)$ and $H=\textup{PU}(n,1)$. Then all the actions determined by $G$,$H$ and $G \times H$ on the space $\Sigma$ of rational functions of the boundary $\partial_\infty \bbH^n_{\bbC}$ are smooth actions. 
\end{prop}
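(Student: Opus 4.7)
The plan is to realize $\Sigma$ as a countable disjoint union of real algebraic varieties on which all three groups act by algebraic automorphisms, and then invoke the general principle that algebraic actions of real algebraic groups on algebraic varieties are smooth.

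To put an algebraic structure on $\Sigma$, I would associate to each $\varphi\in\Sigma$ the Zariski closure $\overline{\Gamma}_\varphi \subset (\mathbf{G}/\mathbf{P})\times(\mathbf{H}/\mathbf{Q})$ of its graph, which is an irreducible subvariety projecting birationally onto $\mathbf{G}/\mathbf{P}$. Stratifying by Hilbert polynomial produces a countable disjoint decomposition of the Hilbert scheme of the projective variety $(\mathbf{G}/\mathbf{P})\times(\mathbf{H}/\mathbf{Q})$ into quasi-projective pieces, and the condition ``projects birationally onto the first factor, with the corresponding rational map defined over $\bbR$'' cuts out a locally closed subset of each piece. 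Taking $\bbR$-points gives $\Sigma$ the structure of a countable disjoint union of real quasi-projective varieties, and the map $\varphi\mapsto\overline{\Gamma}_\varphi$ realises this identification as a Borel isomorphism onto its image.

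Under this identification the joint action of $G\times H$ on $\Sigma$ corresponds, on the graph side, to the natural product action of $\mathbf{G}\times\mathbf{H}$ on subvarieties of $(\mathbf{G}/\mathbf{P})\times(\mathbf{H}/\mathbf{Q})$. This action is algebraic and preserves the Hilbert polynomial, hence it preserves the stratification and acts algebraically on each stratum; the same is evidently true for the sub-actions of $G$ and of $H$ through the two factors. At this point I would apply the general fact, essentially due to Chevalley--Rosenlicht and recorded in \cite[Ch.~2]{zimmer:libro}, that an algebraic action of a real algebraic group on a real algebraic variety has locally closed orbits; this forces the orbit space to be countably separated, which is precisely the statement that the action is smooth. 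Running this argument stratum by stratum, for each of $G\times H$, $G$ and $H$ in turn, yields the proposition.

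The main obstacle I anticipate is the first step: making the algebraic and Borel-measurable identification of $\Sigma$ with a subset of the Hilbert scheme of $(\mathbf{G}/\mathbf{P})\times(\mathbf{H}/\mathbf{Q})$ fully rigorous, in particular checking that the locus of graphs of (real) rational self-maps of $\partial_\infty \bbH^n_{\bbC}$ is a locally closed, $(\mathbf{G}\times\mathbf{H})$-stable subvariety of each Hilbert-polynomial stratum. Once this parametrisation is in place, smoothness of all three actions reduces cleanly to the Chevalley--Rosenlicht principle.
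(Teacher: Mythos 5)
This proposition is not proved in the paper at all: it is quoted from Zimmer's book (Proposition 3.3.2), so your attempt should be measured against Zimmer's argument. Your overall strategy is the same as his -- exhibit $\Sigma$ as a countable union of invariant pieces carrying an algebraic structure for which $G$, $H$ and $G\times H$ act algebraically, then apply the orbit theorem (locally closed orbits, hence countably separated quotients by Effros' criterion) piece by piece -- but your parametrization is genuinely different. Zimmer obtains the countable decomposition very cheaply: a rational map between the flag varieties of degree at most $d$ is given in coordinates by tuples of polynomials of bounded degree, so the maps of degree $\leq d$ form a variety over the base field on which both groups act rationally; this coefficient description is also what fixes the standard Borel structure on $\textup{Rat}(\partial_\infty \bbH^n_{\bbC},\partial_\infty \bbH^n_{\bbC})$ with respect to which the paper later checks measurability of $s\mapsto\widehat{\phi}_s$. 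You instead go through the Hilbert scheme of graph closures in $(\mathbf{G}/\mathbf{P})\times(\mathbf{H}/\mathbf{Q})$ stratified by Hilbert polynomial. That route can be made rigorous, but the step you flag as the anticipated obstacle is precisely where all the content lies: one must check that the locus of integral subschemes projecting birationally onto the first factor is locally closed in each stratum (openness of geometric integrality plus local constancy of the multidegree in flat projective families), that Galois-stability cuts out the maps defined over $\bbR$, and -- importantly for the way the proposition is used in Section \ref{sec:complex:case} -- that the Borel structure you obtain coincides with the standard one on $\Sigma$, since smoothness is a statement about a fixed Borel structure. Two smaller corrections: Chevalley--Rosenlicht (Zariski-local closedness of orbits over an algebraically closed field) is not by itself enough; you need the statement for the actions of the groups of real points on the real points of the strata (Zimmer, Theorem 3.1.3, going back to Borel--Serre), combined with Effros' theorem, and this material is in Chapter 3 of Zimmer's book rather than Chapter 2. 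With these caveats your argument is correct, but it buys no extra generality here and is considerably heavier than the bounded-degree coefficient parametrization it replaces.
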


We will see that smoothness will play a crucial role in the proofs of our theorems in order to exploit correctly the ergodicity of the space we are going to deal with. 

\subsection{Ergodicity and metric ergodicity} \label{sec:ergodicity}

In this section we are going to recall the main definitions and results about ergodic theory and its interaction with the world of algebraic varieties. We will start focusing our attention on a stronger notion of ergodicity, namely metric ergodicity. For more details about this section we mainly refer to \cite[Chapter 2]{zimmer:libro}, \cite{Bader:Furman:Unpub}, \cite[Section 2.4]{SS21}.

A \emph{standard Borel probability space} $(\Omega,\mu)$ is a probability space which is Borel isomorphic to a Polish space (that is a separable and completely metrizable space). If $\Gamma$ is a discrete countable group acting on $(\Omega,\mu)$ in a measure preserving way, we say that $(\Omega,\mu)$ is a standard Borel probability $\Gamma$-\emph{space}. If the action preserves only the measure class of $\mu$, we call $(\Omega,\mu)$ a \emph{Lebesgue} $\Gamma$-space. We say that the $\Gamma$-space is \emph{ergodic} if any measurable $\Gamma$-invariant (or quasi-invariant) subset of $\Omega$ has either full or null measure. Another important fact about ergodicity is the possibility to characterize it in terms of measurable invariant functions.  If $(\Omega,\mu)$ is an ergodic $\Gamma$-space, one can show \cite[Proposition 2.1.11]{zimmer:libro} that any Borel measurable $\Gamma$-invariant function with values in a countably separated space must be essentially constant. 

Here we want to introduce a stronger notion of ergodicity, allowing equivariant maps with metric spaces as targets. 

\begin{deft}\label{def:metric:ergodicity}
Let $\Gamma$ be a discrete countable group and let $(\Omega,\mu)$ be a Lebesgue $\Gamma$-space. Consider any separable metric space $(X,d)$ with an isometric $\Gamma$-action. We say that the $\Gamma$-action on $(\Omega,\mu)$ is \emph{metrically ergodic} if any $\Gamma$-equivariant measurable map $\Omega \rightarrow X$ is essentially constant.  
\end{deft}

Notice that metric ergodicity implies ergodicity by taking the discrete metric space $X=\{0,1\}$ with trivial $\Gamma$-action. As noticed by Bader and Furman \cite{Bader:Furman:Unpub}, metric ergodicity straightens the notion of ergodicity with \emph{unitary coefficients} introduced by Burger and Monod \cite{burger2:articolo}. In that case the allowed target can only be a Hilbert space with a unitary action, whereas here we allow any metric $\Gamma$-space. 

In what follows we will be particularly interested in the action of a lattice $\Gamma < G$ in an almost simple Lie group on the homogeneous quotients of the form $G/H$, where $H<G$ is a closed subgroup. As in the case of the Howe-Moore Theorem \cite[Theorem 2.2.20]{zimmer:libro} which gives us back an ergodicity criterion based on the non-compactness of $H$, here we have something analogous for metric ergodicity. 

\begin{prop}{\upshape \cite[Theorem 6.6, Corollary 6.7]{Bader:Gelander}}\label{prop:metrically:ergodic}
Let $G$ be a semi-simple Lie group with finite center and no compact factors. Let $\Gamma<G$ be a lattice. Suppose that $H$ is a closed subgroup such that $G/H$ has no precompact image in any proper quotient $G/G'$, where $G' \vartriangleleft G$ is a proper normal subgroup. Then the $\Gamma$-action on $G/H$ is metrically ergodic.
\end{prop}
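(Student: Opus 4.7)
The plan is the classical induction-and-Mautner scheme, transplanted from the Hilbert-space setting of Howe--Moore to the general metric setting of Bader--Gelander. Let $(X,d_X)$ be a separable metric $\Gamma$-space and $\phi\colon G/H\to X$ a $\Gamma$-equivariant measurable map. First I would lift $\phi$ to $\tilde\phi\colon G\to X$ by $\tilde\phi(g):=\phi(gH)$; this is left $\Gamma$-equivariant and right $H$-invariant.

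Next I would form the Polish space of $\Gamma$-equivariant measurable lifts
\[
Y:=\{f\colon G\to X\ \text{measurable} : f(\gamma g)=\gamma\cdot f(g)\ \text{for all}\ \gamma\in\Gamma,\ \text{a.e.}\ g\}/\sim,
\]
where $\sim$ is a.e.\ equality, equipped with the Ky Fan-type metric
\[
d_Y(f_1,f_2):=\int_{\Gamma\backslash G}\min\bigl(d_X(f_1(g),f_2(g)),1\bigr)\,d\bar\mu(g),
\]
with $\bar\mu$ the $G$-invariant probability measure on $\Gamma\backslash G$; the integrand descends to $\Gamma\backslash G$ since $\Gamma$ acts isometrically on $X$. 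As $G$ is semisimple hence unimodular, right translation $(g_0\cdot f)(g):=f(gg_0)$ yields a continuous isometric $G$-action on $Y$, and $\tilde\phi\in Y$ is tautologically $H$-fixed.

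The crux is then to upgrade $H$-invariance to $G$-invariance of $\tilde\phi\in Y$: granted this, $\tilde\phi$ is a.e.\ right $G$-invariant, hence a.e.\ constant on $G$, and $\phi$ is essentially constant on $G/H$. For the upgrade I would analyse the closed stabilizer $L:=\Stab_G(\tilde\phi)\supset H$ and exploit the metric Mautner lemma (an immediate consequence of continuity and isometry of the $G$-action on $Y$): $L$ contains every $u\in G$ for which there exists $a\in L$ with $a^n u a^{-n}\to e$. Iterating this for each $\bbR$-diagonalizable element of $H$ together with the associated contracting unipotent root subgroups of $G$, $L$ enlarges to a closed subgroup which, by the structure theory of semisimple Lie groups with no compact factors, projects cocompactly onto $G/G'$ for some proper closed normal $G'\vartriangleleft G$ unless $L=G$. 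The standing hypothesis on $H\subset L$ rules out the former, so $L=G$, completing the proof.

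The main obstacle is making this last Mautner-phenomenon step fully rigorous outside the Hilbert setting. In the linear case, Howe--Moore gives the conclusion directly; for isometric actions on Polish spaces one must either invoke Bader--Gelander's framework of equicontinuous $G$-actions and their weakly almost-periodic compactifications, or argue directly with orbit closures in $Y$ by exploiting the conjugation dynamics of $\bbR$-split tori on the unipotent radicals of $G$.
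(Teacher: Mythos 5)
The paper does not prove this statement at all: it is quoted verbatim from Bader--Gelander (their Theorem 6.6 and Corollary 6.7), so what you are really attempting is a reproof of the cited theorem. Your reduction step is fine and is indeed the standard suspension: forming $Y=\upL^0_\Gamma(G,X)$ with the metric integrated over $\Gamma\backslash G$, getting a continuous isometric right $G$-action, and observing that $\tilde\phi$ is an $H$-fixed point whose $G$-invariance would give essential constancy. The genuine gap is exactly the step you yourself flag, and it is not a technicality --- it is the whole content of the cited result. First, your Mautner iteration is launched ``for each $\bbR$-diagonalizable element of $H$'', but the hypothesis only gives that $H$ is closed with non-precompact image in proper quotients; $H$ may contain no nontrivial $\bbR$-diagonalizable elements whatsoever. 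In this very paper the proposition is applied with $H=U'$ a \emph{unipotent} subgroup (Lemma \ref{lem:algebraic:representation} and the proofs of Theorems \ref{teor:real:case} and \ref{teor:complex:case}), and for such $H$ the naive conjugation lemma applied to elements of $H$ produces nothing: one needs the genuine Mautner/Howe--Moore phenomenon (e.g.\ a continuity-plus-double-coset argument on the bi-$H$-invariant continuous function $g\mapsto d_Y(g\cdot\tilde\phi,\tilde\phi)$, or Bader--Gelander's WAP/equicontinuity machinery) to first force invariance under a split torus before any contraction argument can start.

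Second, your endgame dichotomy is false as stated: from ``$L:=\Stab_G(\tilde\phi)$ is a proper closed subgroup containing $H$'' one cannot conclude that $L$ projects precompactly (or cocompactly) to some proper quotient --- proper closed subgroups such as lattices, or the simple non-compact subgroup $W$ of this paper, have non-precompact image in every proper quotient, so the standing hypothesis on $H$ cannot be invoked the way you do. The correct conclusion one must extract from the Howe--Moore-type step is that $L$ contains every simple factor of $G$ in whose direction $H$ is unbounded (e.g.\ because $L$ contains the opposite horospherical subgroups generated along that factor), and only then does the hypothesis that $H$ has non-precompact image in each proper quotient yield $L=G$. Since you leave precisely this metric Howe--Moore/Mautner upgrade unproved (``the main obstacle''), the proposal is an outline of the strategy behind Bader--Gelander's theorem rather than a proof of it; to make it rigorous you would either have to carry out their analysis of isometric/uniformly equicontinuous actions of semisimple groups, or simply cite \cite[Theorem 6.6, Corollary 6.7]{Bader:Gelander} as the paper does.
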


Another important property of metric ergodicity is that it guarantees the ergodicity of products with respect to the diagonal action. More precisely, if $(\Omega,\mu)$ is a metrically ergodic $\Gamma$-space and $(\Theta,\nu)$ is an ergodic standard Borel probability $\Gamma$-space, their product $(\Omega \times \Theta,\mu \otimes \nu)$ is $\Gamma$-ergodic with respect to the diagonal action \cite[Corollary 6.9]{Bader:Gelander}.

\subsection{Representability of ergodic actions}\label{sec:representability:actions}

The notion of \emph{algebraic representability of an ergodic action} has been introduced by Bader and Furman \cite{Bader:Furman:Unpub,BF22} and it gives a criterion to translate a generic ergodic action on some standard Borel space in terms of some more natural algebraic action of the group on an algebraic variety.  

This section is devoted to the anologue of algebraic representability for \emph{measurable cocyles}. We are going to treat measurable cocycles in a more detailed way in Section \ref{sec:boundary:theory}. Here we simply recall their definition. Let $G,H$ be two locally compact second countable groups and let $\Gamma<G$ be a lattice. Consider a Lebesgue probability $\Gamma$-space $(\Omega,\mu)$. A Borel measurable function $\sigma:\Gamma \times \Omega \rightarrow H$ is a \emph{measurable cocycle} if 
$$
\sigma(\gamma_1 \gamma_2,s)=\sigma(\gamma_1,\gamma_2.s)\sigma(\gamma_2,s) \ ,
$$
for every $\gamma_1,\gamma_2 \in \Gamma$ and almost every $s \in \Omega$. 

\begin{deft}{\upshape \cite[Definition 9.1]{Bader:Furman:Unpub}}\label{def:algebraic:representability}
Let $\Gamma<G$ be a lattice in a second countable group and let $T<G$ be a closed subgroup. Consider a Lebesgue $(\Gamma \times T)$-space $(\Omega,\mu)$ and a $\kappa$-algebraic group $\mathbf{H}$, where $\kappa$ is a local field. A measurable cocycle $\sigma:\Gamma \times \Omega \rightarrow \mathbf{H}(\kappa)$ admits a $T$-\emph{algebraic representation} if there exist
\begin{itemize}
  \item a $\kappa$-algebraic group $\mathbf{L}$,
  \item a $\kappa$-algebraic $(\mathbf{H} \times \mathbf{L})$-variety $\mathbf{V}$,
  \item a continuous Zariski dense homomorphism $\tau:T \rightarrow \mathbf{L}(\kappa)$, 
  \item a measurable map $\Psi:\Omega \rightarrow \mathbf{V}(\kappa)$ which is $(\Gamma \times T)$-equivariant, that is
$$
\Psi(\gamma.s.t^{-1})=\sigma(\gamma,s)\Psi(s)\tau(t)^{-1} \ ,
$$
for every $\gamma \in \Gamma,t \in T$ and almost every $s \in \Omega$. 
\end{itemize}
We say that the tuple $(\mathbf{L},\mathbf{V},\tau,\Psi)$ is the $T$-\emph{algebraic representation} of $\sigma$. 
\end{deft}

In the same setting of the previous definition, suppose we have another $T$-algebraic representation $(\mathbf{L}',\mathbf{V}',\tau',\Psi')$. We define $\mathbf{M}$ as the Zariski closure of the product morphism $\tau \times \tau':T \rightarrow \mathbf{L} \times \mathbf{L}'$. Notice that $\mathbf{M}$ acts on both $\mathbf{V}$ and $\mathbf{V}'$ factoring via the projections on $\mathbf{L}$ and $\mathbf{L}'$. A \emph{morphism} of $T$-algebraic representations if a $\kappa$-algebraic map $\varphi:\mathbf{V} \rightarrow \mathbf{V}'$ which is $(\mathbf{H} \times \mathbf{M})$-equivariant and such that $\Psi'=\varphi \circ \Psi$. In this way we have built the \emph{category} of $T$-algebraic representations.

We say that a $T$-algebraic representation is a \emph{coset representations} if $\mathbf{V}=\mathbf{H}/\mathbf{J}$ for some $\kappa$-algebraic subgroup $\mathbf{J} < \mathbf{H}$ and $\mathbf{L}<N_\mathbf{H}(\mathbf{J})/\mathbf{J}$. For such a coset representation we write only the triple $(\mathbf{J},\tau,\Psi)$ to identify the $T$-algebraic representation. 

\begin{lem}\label{lem:algebraic:representation}
Let $G$ be a semi-simple Lie group, let $\Gamma<G$ be a lattice and let $T<G$ be a closed subgroup whose image is not precompact in any proper quotient of $G$. Consider an ergodic Lebesgue $\Gamma$-space $(\Omega,\mu)$ and a measurable cocycle $\overline{\sigma}:\Gamma \times \Omega \rightarrow \mathbf{H}(\kappa)$, where $\mathbf{H}$ is an algebraic group over a local field $\kappa$. Consider the $(\Gamma \times T)$-space $G \times \Omega$, where $\Gamma$ acts diagonally and $T$ acts only on the first factor. If $\sigma:\Gamma \times (G \times \Omega) \rightarrow \mathbf{H}(\kappa), \ \sigma(\gamma,(g,s))=\overline{\sigma}(\gamma,s),$ is the cocycle obtained by extending $\overline{\sigma}$, the category of $T$-algebraic representations of $\sigma$ has an initial object which is a coset representation of the form $(\mathbf{J},\tau,\Psi)$. 
\end{lem}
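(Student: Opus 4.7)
The plan is to follow the Bader--Furman framework \cite{BF22} for constructing initial objects in categories of algebraic representations, adapted to the measurable cocycle setting. I would split the argument into three steps: showing that the category is non-empty, reducing an arbitrary $T$-algebraic representation to a coset representation, and then selecting an initial coset representation via a minimal-dimension argument. Non-emptiness is immediate, since the trivial tuple (with $\mathbf{L}$ trivial, $\mathbf{V}$ a point, and $\tau,\Psi$ constant) is a $T$-algebraic representation. The ergodic input I would need is the metric ergodicity of the $\Gamma$-action on $G/T$ provided by Proposition \ref{prop:metrically:ergodic} (valid under our hypothesis on $T$), combined with the ergodicity of $(\Omega,\mu)$; together these yield ergodicity of the relevant $(\Gamma \times T)$-action on $G \times \Omega$ via the product argument recorded after Proposition \ref{prop:metrically:ergodic}.

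For the reduction step, given $(\mathbf{L},\mathbf{V},\tau,\Psi)$, I would consider the orbit map $\mathbf{V} \to \mathbf{V}//\mathbf{H}$, whose target is countably separated because the $\mathbf{H}$-action on an algebraic variety is smooth (Section \ref{sec:smoothness}). The composition $\overline{\Psi}\colon G \times \Omega \to (\mathbf{V}//\mathbf{H})(\kappa)$ is $\Gamma$-invariant since the cocycle values lie in $\mathbf{H}$ and preserve $\mathbf{H}$-orbits, while it is $T$-equivariant via $\tau$, as the $\mathbf{L}$-action on $\mathbf{V}$ descends to $\mathbf{V}//\mathbf{H}$. Applying the ergodicity from the previous step, $\overline{\Psi}$ must be essentially constant equal to a single $\mathbf{H}$-orbit $\mathbf{H}/\mathbf{J}$, where $\mathbf{J}$ is the stabilizer of a generic value of $\Psi$. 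A routine check, using Zariski density of $\tau(T)$, shows that $\tau$ factors through $(N_{\mathbf{H}}(\mathbf{J})/\mathbf{J})(\kappa)$, producing a coset representation $(\mathbf{J},\tau,\Psi')$ together with a morphism from the original representation.

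For the selection of an initial object, I would exploit that finite products of $T$-algebraic representations are again $T$-algebraic (using the Zariski closure of $\tau \times \tau'$ in $\mathbf{L} \times \mathbf{L}'$), and that $\kappa$-algebraic subgroups of $\mathbf{H}$ satisfy the descending chain condition. Among all coset representations of $\sigma$, I would select one $(\mathbf{J},\tau,\Psi)$ with $\dim \mathbf{J}$ minimal and, among those, with the smallest number of irreducible components. To verify initiality, given any other $T$-algebraic representation, I would first reduce it to coset form $(\mathbf{J}',\tau',\Psi')$, then form the product representation on $\mathbf{H}/\mathbf{J} \times \mathbf{H}/\mathbf{J}'$ and apply the reduction step again: the resulting stabilizer $\mathbf{J}''$ is subconjugate to both $\mathbf{J}$ and $\mathbf{J}'$, and by minimality $\mathbf{J}''$ must be conjugate to $\mathbf{J}$. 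Projecting to the second factor then furnishes the required morphism to $(\mathbf{J}',\tau',\Psi')$, while uniqueness follows by applying the same ergodic argument to the difference of two candidate morphisms.

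The main obstacle I anticipate lies in the reduction step: making precise the essential constancy of the orbit map $\overline{\Psi}$ requires careful bookkeeping of the combined $\Gamma$-invariance and $T$-equivariance on the product space $G \times \Omega$, bearing in mind that the target orbit space is only countably separated rather than Hausdorff. Equally delicate is the identification of $\tau$ as a homomorphism into $(N_{\mathbf{H}}(\mathbf{J})/\mathbf{J})(\kappa)$: a priori $\tau(T)$ only permutes $\mathbf{H}$-orbits, and upgrading this set-theoretic permutation to the algebraic normalizer action relies on the Zariski density of $\tau(T)$ inside $\mathbf{L}(\kappa)$. Once these points are settled, the rest of the argument should follow the standard pattern of initial-object constructions in Bader--Furman's theory.
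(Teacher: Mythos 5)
Your overall skeleton (reduce an arbitrary $T$-algebraic representation to coset type, then get the initial object from Noetherianity/minimality among coset representations) is the same Bader--Furman pattern the paper follows, and your non-emptiness and selection steps are fine. The genuine gap is in the reduction step. The composition $\overline{\Psi}\colon G\times\Omega\to \mathbf{V}/\mathbf{H}$ is only $\Gamma$-invariant; it is \emph{not} $(\Gamma\times T)$-invariant, since $T$ still acts on the target through $\tau$ and the residual $\mathbf{L}$-action. Ergodicity of the $(\Gamma\times T)$-action on $G\times\Omega$ therefore does not make $\overline{\Psi}$ essentially constant, and your conclusion that the image lies in a single $\mathbf{H}$-orbit $\mathbf{H}/\mathbf{J}$ is unjustified as stated. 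If you quotient by the full group $\mathbf{H}\times\mathbf{L}$ (which is what ergodicity actually allows, and is how the paper proceeds, by applying \cite[Proposition 5.2]{Bader:Furman:Unpub} to the product cocycle $\sigma\times\tau$), you only land in a single $(\mathbf{H}\times\mathbf{L})$-orbit, i.e.\ $\mathbf{V}=(\mathbf{H}\times\mathbf{L})/\mathbf{M}$, and you still have to eliminate the $\mathbf{L}$-factor to obtain a coset space of $\mathbf{H}$ alone.

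That elimination is the step your proposal is missing, and it is where the real work lies. In the paper one projects to $(\mathbf{L}/\pi_2(\mathbf{M}))(\kappa)$, uses the $\sigma$-equivariance to factor the resulting map through $G/\Gamma\times\Omega$, invokes Howe--Moore to get that the $T$-action on $G/\Gamma$ is mixing (hence weakly mixing), and then applies \cite[Proposition 3.3]{BF22} to the $\tau$-equivariant slices of this map: the target must contain an $\mathbf{L}$-fixed point, forcing $\pi_2(\mathbf{M})=\mathbf{L}$ and hence $\mathbf{V}\cong\mathbf{H}/\pi_1(\mathbf{M})$, a coset representation of $\mathbf{H}$. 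Note also that the hypothesis on $T$ enters twice: non-precompactness in quotients gives metric ergodicity of the $\Gamma$-action on $T\backslash G$ (Proposition \ref{prop:metrically:ergodic}), whence ergodicity of $\Gamma\times T$ on $G\times\Omega$, while non-compactness is what feeds Howe--Moore in the elimination step. Your worry about upgrading $\tau$ to a homomorphism into $(N_{\mathbf{H}}(\mathbf{J})/\mathbf{J})(\kappa)$ is a genuine but routine point; the missing weak-mixing argument is the essential one, and without it the reduction to a coset representation of $\mathbf{H}$ does not go through.
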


\begin{proof}
We follow the same strategy of \cite[Theorem 9.2]{Bader:Furman:Unpub}. We claim that it is sufficient to verify that any $T$-algebraic representation can be substituted with a coset representation, as in \cite[Lemma 9.3]{Bader:Furman:Unpub}. 

Let $(\mathbf{L},\mathbf{V},\tau,\Psi)$ be a $T$-algebraic representation of $\sigma$. We can consider the measurable cocycle
$$
\sigma \times \tau:(\Gamma \times T) \times (G \times \Omega) \rightarrow (\mathbf{H} \times \mathbf{L})(\kappa) \ , \ \ \ (\sigma \times \tau)((\gamma,t),(g,s)):=(\sigma(\gamma,(g,s)),\tau(t)) \ .
$$

By the non-compactness of $T$, we know by Proposition \ref{prop:metrically:ergodic} that the $\Gamma$-action on $T\backslash G$ is metrically ergodic, hence the $\Gamma$-action on $T \backslash G \times \Omega$ is ergodic by \cite[Corollary 6.9]{Bader:Gelander}. Thus the joint action of $\Gamma \times T$ on $G \times \Omega$ is ergodic. As a consequence of \cite[Proposition 5.2]{Bader:Furman:Unpub} applied to the cocycle $\sigma \times \tau$, we can suppose that $\mathbf{V}=(\mathbf{H} \times \mathbf{L})/\mathbf{M}$, where $\mathbf{M}<\mathbf{H} \times \mathbf{L}$ is an algebraic subgroup. 

We can consider the following commutative diagram
$$
\xymatrix{
G \times \Omega \ar[rr]^{\hspace{-10pt}\Psi} \ar[d] && ((\mathbf{H} \times \mathbf{L})/\mathbf{M})(\kappa) \ar[d]^{\pi_2} \\
G/\Gamma \times \Omega \ar[rr]^{\hspace{-10pt}\overline{\Psi}} &&  (\mathbf{L}/\pi_2(\mathbf{M}))(\kappa) \ .
}
$$
In the above diagram, $\pi_2$ is the projection on the second factor and we exploited the $\sigma$-equivariance of $\Psi$ to obtain the factorization of $\Psi$ through $\overline{\Psi}$. 

Since $T$ is not compact, the Howe-Moore Theorem \cite[Theorem 2.2.20]{zimmer:libro} guarantees that the $T$-action on $G/\Gamma$ is mixing, and hence weakly mixing. Since any slice of the map $\overline{\Psi}$ is a measurable $\tau$-equivariant map, by \cite[Proposition 3.3]{BF22} we must have a $\mathbf{L}$-fixed point in the target and hence $\mathbf{L}=\pi_2(\mathbf{M})$. Thus $\mathbf{V} \cong (\mathbf{H}/\pi_1(\mathbf{M}))$ is an algebraic representation of coset type. 

To conclude the proof it is sufficient to exploit the Noetherianity of $\mathbf{H}$ and to follow the line of the proof of \cite[Theorem 9.2]{Bader:Furman:Unpub}.   
\end{proof}
 
We stated the previous lemma in a slightly different way with respect to \cite[Theorem 9.2]{Bader:Furman:Unpub}. Our statement is actually more near to \cite[Theorem 4.3]{BF22} given for representations. The reason is that the main ingredient exploited to prove Lemma \ref{lem:algebraic:representation} is \cite[Proposition 3.3]{BF22}, which ensures that any measurable map obtained by representing algebraically the weakly mixing $T$-action on $G/\Gamma$ must be essentially constant and the image must be a fixed point. 

We conclude the section with the following result which give us a stability criterion of the measurable map appearing in the coset representation.

\begin{lem}{\upshape \cite[Lemma 4.4]{BFMS21}}\label{lem:same:map}
Let $\Gamma<G$ be a lattice and let $(\Omega,\mu)$ be an ergodic standard Borel probability space. Let $T<G$ be a closed non compact subgroup and let $\sigma:\Gamma \times \Omega \rightarrow \mathbf{H}(\kappa)$ be a Zariski dense measurable cocycle in the $\kappa$-points of some algebraic $\kappa$-group $\mathbf{H}$, where $\kappa$ is a local field. Suppose there exists a sequence $T=T_0 \vartriangleleft \cdots \vartriangleleft T_n=T'$ of subgroups of $G$ such that $T_i \vartriangleleft T_{i+1}$ for $i=0,\cdots,n-1$. If $(\mathbf{J},\tau,\Psi)$ and $(\mathbf{J}',\tau',\Psi')$ are the initial coset algebraic representations of $T$ and $T'$, respectively, then we can suppose $\mathbf{J}=\mathbf{J}'$ and $\phi=\phi'$ almost everywhere. 
\end{lem}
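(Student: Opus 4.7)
The plan is to proceed by induction on $n$, so it suffices to treat a single step $T \triangleleft T'$. The key idea is to show that the initial coset $T$-algebraic representation $(\mathbf{J},\tau,\Psi)$ automatically carries a $T'$-algebraic representation structure extending $\tau$, and then to compare it with $(\mathbf{J}',\tau',\Psi')$ using the initiality of both objects.

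First, restricting $(\mathbf{J}',\tau',\Psi')$ along the inclusion $T \hookrightarrow T'$ yields a $T$-algebraic representation of $\sigma$, and by initiality of $(\mathbf{J},\tau,\Psi)$ we obtain a canonical morphism $\varphi:\mathbf{H}/\mathbf{J}\to\mathbf{H}/\mathbf{J}'$ with $\Psi'=\varphi\circ\Psi$ almost everywhere. For the reverse direction we exploit normality: for each $t'\in T'$ define the twist
\[
\tau^{t'}(t):=\tau(t'tt'^{-1}),\qquad \Psi^{t'}(g,s):=\Psi(gt'^{-1},s).
\]
Since $\Gamma$ acts diagonally on $G\times\Omega$, the extended cocycle $\sigma(\gamma,(g,s))=\overline{\sigma}(\gamma,s)$ is independent of the $G$-coordinate, and a direct computation using $T\triangleleft T'$ shows that $(\mathbf{J},\tau^{t'},\Psi^{t'})$ is again a $T$-algebraic representation of $\sigma$. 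Applying initiality, there is a unique $(\mathbf{H}\times\mathbf{M})$-equivariant algebraic map from $(\mathbf{J},\tau,\Psi)$ to $(\mathbf{J},\tau^{t'},\Psi^{t'})$; since both live over $\mathbf{H}/\mathbf{J}$, it is right multiplication by a unique element $\widetilde{\tau}(t')\in(N_\mathbf{H}(\mathbf{J})/\mathbf{J})(\kappa)$ such that $\Psi(gt'^{-1},s)=\Psi(g,s)\widetilde{\tau}(t')^{-1}$.

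Uniqueness of this element immediately forces the relation $\widetilde{\tau}(t'_1 t'_2)=\widetilde{\tau}(t'_1)\widetilde{\tau}(t'_2)$, so $\widetilde{\tau}:T'\to(N_\mathbf{H}(\mathbf{J})/\mathbf{J})(\kappa)$ is a homomorphism which, restricted to $T$, coincides with $\tau$. Its Borel measurability follows from a standard Fubini-type argument applied to the measurable map $(t',g,s)\mapsto\Psi(gt'^{-1},s)$. Letting $\mathbf{L}'$ be the Zariski closure of $\widetilde{\tau}(T')$, we obtain a genuine $T'$-algebraic coset representation $(\mathbf{J},\widetilde{\tau},\Psi)$ of $\sigma$.

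By initiality of $(\mathbf{J}',\tau',\Psi')$ in the category of $T'$-algebraic representations, there is a canonical morphism $\widetilde{\varphi}:\mathbf{H}/\mathbf{J}'\to\mathbf{H}/\mathbf{J}$ with $\Psi=\widetilde{\varphi}\circ\Psi'$ almost everywhere. Since the unique self-morphism of an initial object is the identity, the compositions $\widetilde{\varphi}\circ\varphi$ and $\varphi\circ\widetilde{\varphi}$ must both be the identity, so $\varphi$ is an isomorphism of coset representations. Replacing $\mathbf{J}'$ by a conjugate we may arrange $\mathbf{J}=\mathbf{J}'$ and $\Psi=\Psi'$ almost everywhere, completing the induction. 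The delicate part will be Step 2: producing the extension $\widetilde{\tau}$, verifying the correct equivariance of the twisted pair $(\tau^{t'},\Psi^{t'})$ with the left/right conventions straight, and checking measurability. The conceptual content is that normality is precisely what guarantees that conjugation by $t'$ stays inside $T$, so that initiality of the $T$-representation can be applied to the twist.
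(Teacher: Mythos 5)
The paper does not prove this lemma itself; it imports it verbatim from \cite[Lemma 4.4]{BFMS21}, and your argument is essentially the proof given there (and in the Bader--Furman framework): restrict the $T'$-initial object to $T$, use normality to twist $(\tau,\Psi)$ by right translation by $t'\in T'$, invoke uniqueness of morphisms out of the initial $T$-object to produce the extension $\widetilde{\tau}$, and then compare the two initial objects. Your verification of the twisted equivariance (using that $t'tt'^{-1}\in T$ and that the extended cocycle does not depend on the $G$-coordinate) and the homomorphism property of $\widetilde{\tau}$ are correct. The only step stated too quickly is the claim that $\varphi\circ\widetilde{\varphi}=\mathrm{id}$ ``by initiality'': at that point $\varphi$ is only known to be a morphism of $T$-representations, so initiality of $(\mathbf{J}',\tau',\Psi')$ in the $T'$-category does not apply directly; either first check that $\varphi$ intertwines $\widetilde{\tau}$ and $\tau'$ on the Zariski dense essential image of $\Psi$, hence everywhere, or bypass the point entirely by observing that the one-sided identity $\widetilde{\varphi}\circ\varphi=\mathrm{id}$ between $\mathbf{H}$-equivariant maps of coset varieties already forces $\mathbf{J}$ and $\mathbf{J}'$ to be conjugate and $\Psi,\Psi'$ to agree after this identification, which is all the statement requires.
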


Thanks to an inductive argument, the key step in the proof is to show the above statement when $T < T' \leq N_G(T)$, where $N_G(T)$ is the $G$-normalizer of $T$. The same proof of \cite[Theorem 4.6]{Bader:Furman:Unpub} implies that the involved maps coincide almost everywhere. The fact that we deal with initial objects allows to prove that $\mathbf{J}=\mathbf{J}'$ and to get the desired statement. 

\section{Measurable cocycles and boundary theory}\label{sec:boundary:theory}

In the previous section we have seen the notion of measurable cocycle and how it can be algebraically represented. In this section we will give some more details about cocycles and about measurable maps that are equivariant with respect to them. For more details we refer the reader to \cite[Section 2.1]{SS21}. 

Let $G,H$ be two locally compact secount countable groups and let $\Gamma<G$ be a lattice. Consider a standard Borel probability $\Gamma$-space. Two measurable cocycles $\sigma_1,\sigma_2:\Gamma \times \Omega \rightarrow H$ are \emph{equivalent} (or \emph{cohomologous}) if there exists a measurable map $f:\Omega \rightarrow H$ such that
$$
\sigma_2(\gamma,s)=f(\gamma.s)^{-1}\sigma_1(\gamma,s)f(s) \ ,
$$
for every $\gamma \in \Gamma$ and almost every $s \in \Omega$. The words cocycle and cohomologous refer to the notion of cohomology of a countable equivalence relation on a probability space, introduced by Feldman and Moore \cite{feldman:moore}. In this case the countable equivalence relation is the orbital relation determined by the $\Gamma$-action on $(\Omega,\mu)$. 

An interesting source of measurable cocycles comes from representation theory. Indeed, given any representation $\rho:\Gamma \rightarrow H$, we can construct a measurable cocycle $\sigma_\rho:\Gamma \times \Omega \rightarrow H$ by setting $\sigma_\rho(\gamma,s):=\rho(\gamma)$. We say that a measurable cocycle $\sigma$ is \emph{trivializable} if $\sigma$ is cohomologous to a cocycle of the form $\sigma_\rho$.

Suppose now that $H$ corresponds to the $\kappa$-points of some $\kappa$-algebraic group $\mathbf{H}$, for some local field $\kappa$. It is well-known that the Zariski closure of the image of a representation $\Gamma \rightarrow \mathbf{H}(\kappa)$ is itself a group. In the context of measurable cocycles, even if the image does not have any nice algebraic structure, there exists a notion that reminds the one of Zariski closure for representations. The existence of such notion is guaranteed by the Noetherianity of the group $\mathbf{H}$. 

\begin{deft}\label{def:algebraic:group}
Let $\sigma:\Gamma \times \Omega \rightarrow \mathbf{H}(\kappa)$ be a measurable cocycle, with $(\Omega,\mu)$ an ergodic standard Borel probability space. The \emph{algebraic hull} of $\sigma$ is the (conjugacy class of the) smallest $\kappa$-algebraic subgroup $\mathbf{L}<\mathbf{H}$ which contains the image of a cocycle cohomologous to $\sigma$. We say that $\sigma$ is \emph{Zariski dense} if $\mathbf{L}=\mathbf{H}$. 
\end{deft}

An important tool in the study of measurable cocycles is given by measurable equivariant maps. More precisely we have the following

\begin{deft}\label{def:equivariant:map:cocycle}
Let $\Gamma < G$ be a lattice in a simple Lie group and let $(\Omega,\mu)$ be a standard Borel probability $\Gamma$-space. Consider a $\kappa$-algebraic group $\mathbf{H}$ and some $\kappa$-algebraic $\mathbf{H}$-variety $\mathbf{V}$, where $\kappa$ is a local field. Let $\sigma:\Gamma \times \Omega \rightarrow \mathbf{H}(\kappa)$ be a measurable cocycle. Fix $W<G$ a closed subgroup. A measurable map $\Phi:W \backslash G \times \Omega \rightarrow \mathbf{V}(\kappa)$ is $\Gamma$-\emph{equivariant} (or $\sigma$-\emph{equivariant}) if 
$$
\Phi(\gamma.gW,\gamma.s)=\sigma(\gamma,s)\Phi(gW,s) \ ,
$$
for every $\gamma \in \Gamma$ and almost every $g \in G, s \in \Omega$. 
\end{deft}

Given a measurable equivariant map $\Phi:W\backslash G \times \Omega \rightarrow \mathbf{V}(\kappa)$ as in Definition \ref{def:equivariant:map:cocycle}, we know that the map 
$$
\Phi_s:W\backslash G \rightarrow \mathbf{V}(\kappa)  \ , \ \ \Phi_s(gW):=\Phi(gW,s) \ 
$$
is measurable for almost every $s \in\ \Omega$ \cite[Chapter VII, Lemma 1.3]{margulis:libro}. We call $\Phi_s$ the $s$-\emph{slice} associated to $\Phi$. By the $\sigma$-equivariance of the map $\Phi$, we have that
\begin{equation}\label{eq:equivariance:slices}
\Phi_{\gamma.s}(\ \cdot \ )=\sigma(\gamma,s)\Phi_s(\gamma^{-1} \cdot \ ) \ . 
\end{equation}

A particular example of equivariant maps is given by boundary maps. We consider $G=\textup{SU}(n,1)$, $\Gamma < G$ a lattice, $(\Omega,\mu)$ a standard Borel probability $\Gamma$-space and $W=P<G$ a minimal parabolic subgroup. Let $H=\textup{PU}(n,1)$ and $\mathbf{V}(\bbR)=\partial_\infty \bbH^n_{\bbC}$ (by Section \ref{sec:smoothness} we know that $\partial_\infty \bbH^n_{\bbC}$ corresponds to the real points of an algebraic variety). A \emph{boundary map} for a measurable cocycle $\sigma:\Gamma \times \Omega \rightarrow H$ is simply a measurable equivariant map $\Phi: \partial_\infty \bbH^n_{\bbC} \times \Omega \rightarrow \partial_\infty \bbH^n_{\bbC}$. 

We conclude the section with the following proposition which characterizes the slices of a boundary map associated to a Zariski dense cocycle.

\begin{prop}{\upshape \cite[Proposition 4.4]{SS21}}\label{prop:slice:zariski:dense}
Let $\Gamma < \textup{SU}(n,1)$ be a lattice, where $n \geq 2$. Let $(\Omega,\mu)$ be an ergodic standard Borel probability $\Gamma$-space. Consider a Zariski dense cocycle $\sigma:\Gamma \times \Omega \rightarrow \textup{PU}(n,1)$. If $\phi:\partial_\infty \bbH^n_{\bbC} \times \Omega \rightarrow \partial_\infty \bbH^n_{\bbC}$ is a boundary map for $\sigma$, then the slice $\phi_s$ has Zariski dense essential image for almost every $s \in \Omega$. 
\end{prop}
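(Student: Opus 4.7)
The plan is to argue by contradiction: from the pointwise Zariski closures of the essential images of the slices, extract a $\sigma$-equivariant measurable map into a proper coset variety $\mathbf{H}/\mathbf{N}$, and then invoke the standard reduction for measurable cocycles to contradict the Zariski density of $\sigma$.

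Concretely, identify $\partial_\infty \bbH^n_{\bbC}$ with the real points of $\mathbf{H}/\mathbf{Q}$, where $\mathbf{H}=\textup{PSL}(n+1,\bbC)$ is the complexification of $\textup{PU}(n,1)$ and $\mathbf{Q}<\mathbf{H}$ is a minimal parabolic. For almost every $s\in\Omega$ set
\[
\mathbf{W}_s := \overline{\phi_s\bigl(\partial_\infty \bbH^n_{\bbC}\bigr)}^{\,Z} \subseteq \mathbf{H}/\mathbf{Q},
\]
the Zariski closure of the essential image of the slice. A routine Fubini-type argument shows that $s\mapsto \mathbf{W}_s$ is Borel, and substituting the slice relation \eqref{eq:equivariance:slices} together with the fact that Zariski closures are preserved by translation by elements of $\mathbf{H}$ yields
\[
\mathbf{W}_{\gamma.s} = \sigma(\gamma,s)\,\mathbf{W}_s
\qquad \text{for every } \gamma\in\Gamma \text{ and a.e. } s\in\Omega.
\]

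Suppose by contradiction that $\mathbf{W}_s\ne \mathbf{H}/\mathbf{Q}$ on a set of positive measure. By the equivariance above this set is $\Gamma$-invariant, so ergodicity gives $\mathbf{W}_s\subsetneq \mathbf{H}/\mathbf{Q}$ for a.e.\ $s$. Since $\textup{PU}(n,1)$ is almost simple and acts transitively on $\mathbf{H}/\mathbf{Q}$, the only $\mathbf{H}$-invariant closed subvarieties are the trivial ones, so $\mathbf{N}_s := \textup{Stab}_{\mathbf{H}}(\mathbf{W}_s)$ is a proper algebraic subgroup of $\mathbf{H}$. Next, $\dim\mathbf{W}_s$ and $\deg\mathbf{W}_s$ are $\Gamma$-invariant integer valued functions and so are essentially constant by ergodicity; hence $\mathbf{W}_s$ lies, a.e., in a single component $\mathbf{C}$ of the Chow variety of closed subvarieties of $\mathbf{H}/\mathbf{Q}$ of fixed dimension and degree, which is itself an algebraic $\mathbf{H}$-variety with locally closed orbits. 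A further ergodic reduction on the countable collection of orbit types forces $s\mapsto \mathbf{W}_s$ to land, a.e., in a single orbit $\mathbf{H}/\mathbf{N}\subseteq \mathbf{C}$, with $\mathbf{N}<\mathbf{H}$ a fixed proper algebraic subgroup conjugate to each $\mathbf{N}_s$.

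We have therefore produced a measurable $\sigma$-equivariant map $\Omega \to (\mathbf{H}/\mathbf{N})(\bbC)$ with $\mathbf{N}$ proper. By the standard Zimmer coboundary argument (lift via a Borel section of $\mathbf{H}\to\mathbf{H}/\mathbf{N}$ and transfer the equivariance), $\sigma$ is cohomologous to a cocycle with image in $\mathbf{N}$, so the algebraic hull of $\sigma$ is contained in $\mathbf{N}\subsetneq \mathbf{H}$; this contradicts the Zariski density of $\sigma$. Hence $\mathbf{W}_s=\mathbf{H}/\mathbf{Q}$ for a.e.\ $s$, which is exactly the statement that $\phi_s$ has Zariski dense essential image almost surely. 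The main obstacle is precisely the Chow-variety step: upgrading the measurable assignment $s\mapsto \mathbf{W}_s$, which a priori takes values in a countable union of algebraic varieties of varying orbit type, to a $\sigma$-equivariant measurable map into a single homogeneous coset variety $\mathbf{H}/\mathbf{N}$. This requires carefully combining measurability of the Zariski closure operation with ergodic reductions on dimension, degree and orbit invariants, while keeping track of the real-versus-complex structure coming from $\textup{PU}(n,1)\hookrightarrow \mathbf{H}(\bbC)$, so that the final coboundary reduction genuinely constrains the algebraic hull of $\sigma$.
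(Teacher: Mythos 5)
Your proposal is correct and follows essentially the same strategy as the proof of the cited result \cite[Proposition 4.4]{SS21} (which this paper imports without reproving): take the Zariski closures $\mathbf{W}_s$ of the essential images of the slices, note the equivariance $\mathbf{W}_{\gamma.s}=\sigma(\gamma,s)\mathbf{W}_s$, use ergodicity together with smoothness of the algebraic action on the parametrizing variety of subvarieties of fixed discrete invariants to land in a single orbit, and then contradict Zariski density of $\sigma$ via the standard equivariant-map-into-a-proper-coset-variety argument, since transitivity of the action on the boundary forces a proper stabilizer. The only points needing care, which you correctly flag, are the measurability of $s\mapsto\mathbf{W}_s$ and working with the real form $\textup{PU}(n,1)$ and its orbits rather than the full complex group when extracting the coset representation.
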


\section{Superrigidity for real cocycles}\label{sec:real:case}

In this section we are going to prove our superrigidity theorem in the case of real hyperbolic lattices. The first step will be the construction of a measurable equivariant map whose target is a quasi-projective variety constructed using the compatibility datum. Exploiting the algebraic representability of ergodic actions described in Section \ref{sec:representability:actions} we will construct two morphisms defined on two different subgroups that generate $G$. The possibility to glue those morphisms together will lead to the desired statement. 

We start by fixing the following

\begin{setup}\label{setup:real:case}
 Consider $n \geq 3$ and $G=\textup{SO}^\circ(n,1)$. We assume that:
	\begin{itemize}
	\item $\Gamma \leq G$ is a lattice and $W<G$ is a simple non-compact Lie group;
	\item $(\Omega,\mu)$ is an ergodic standard Borel probability $\Gamma$-space;
	\item $\kappa$ is a local field, $\mathbf{H}$ is a connected simple $\kappa$-algebraic group with $\kappa$-points $H=\mathbf{H}(\kappa)$ and such that $(\mathbf{H},\kappa)$ is compatible with $G$;
	\item $\sigma:\Gamma \times \Omega \rightarrow H$ is a Zariski dense measurable cocycle with unbounded image;
	\item there exists an irreducible $\kappa$-representation of $\mathbf{H}$ on a $\kappa$-vector space $V$;
	\item $\Phi:G \times \Omega \rightarrow \mathcal{M}^1(\bbP(V) )$ is a measurable map such that $\Phi(\gamma.g,\gamma.s)=\sigma(\gamma,s)_\ast \Phi(g,s)$ for every $\gamma \in \Gamma$ and almost every $g \in G, s \in \Omega$, and $\Phi$ is $W$-invariant on the first variable.
	\end{itemize}
\end{setup}

\begin{teor} \label{teor:equivariant:map:real:case}
Under the assumptions of Setup \ref{setup:real:case}, there exists an algebraic $\kappa$-subgroup $\mathbf{L}$ of $\mathbf{H}$ and a measurable $\sigma$-equivariant map $\Phi: G \times \Omega \rightarrow (\mathbf{H}/\mathbf{L})(\kappa)$.
Additionally $\Phi$ is $W$-invariant with respect to the first factor.  
\end{teor}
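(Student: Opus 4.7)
The plan is to exploit the smoothness of the $H$-action on $\calM^1(\bbP(V))$ (Proposition \ref{prop:stabilizer:measure}) together with an ergodicity argument in order to confine the essential image of $\Phi$ to a single $H$-orbit, and then to extract an algebraic subgroup $\mathbf{L} < \mathbf{H}$ by passing to the Zariski closure of the stabilizer of a measure in that orbit.

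First, the $W$-invariance on the first variable lets me factor $\Phi$ through a measurable $\sigma$-equivariant map $\overline{\Phi}\colon W\backslash G \times \Omega \to \calM^1(\bbP(V))$, and postcomposition with the quotient $q\colon \calM^1(\bbP(V)) \to \calM^1(\bbP(V))/H$ produces a $\Gamma$-invariant measurable map into a countably separated space (the $\sigma$-equivariance collapses to invariance after quotienting by $H$). Since $W$ is simple and non-compact inside the simple group $G$, Proposition \ref{prop:metrically:ergodic} yields metric $\Gamma$-ergodicity of $W\backslash G$; combined with $\Gamma$-ergodicity of $(\Omega,\mu)$ via \cite[Corollary 6.9]{Bader:Gelander} (recalled at the end of Section \ref{sec:ergodicity}), this gives ergodicity of the diagonal $\Gamma$-action on $W\backslash G \times \Omega$. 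A $\Gamma$-invariant measurable map from an ergodic space to a countably separated space is essentially constant, so the essential image of $\overline{\Phi}$ lies in a single $H$-orbit $H\cdot\mu_0$.

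Next, Proposition \ref{prop:stabilizer:measure} tells me that $\Stab_H(\mu_0)$ is a compact extension of $\mathbf{L}_0(\kappa)$ for some $\kappa$-algebraic subgroup $\mathbf{L}_0 < \mathbf{H}$. I would take $\mathbf{L}$ to be the Zariski closure of $\Stab_H(\mu_0)$ in $\mathbf{H}$; then $\mathbf{L}$ is algebraic and $\mathbf{L}(\kappa) \supseteq \Stab_H(\mu_0)$, so the identification $H\cdot\mu_0 \cong H/\Stab_H(\mu_0)$ composed with the natural $H$-equivariant surjection $H/\Stab_H(\mu_0) \twoheadrightarrow H/\mathbf{L}(\kappa) \hookrightarrow (\mathbf{H}/\mathbf{L})(\kappa)$ produces the required map. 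Both the $\sigma$-equivariance and the $W$-invariance in the first variable descend automatically because each intermediate map is $H$-equivariant on the target and introduces no new dependence on $g \in G$.

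The main obstacle is the ergodicity step that pins $\overline{\Phi}$ down to a single orbit: one has to verify carefully that the diagonal $\Gamma$-action on $W\backslash G \times \Omega$ is ergodic, combining metric ergodicity of $W\backslash G$ with plain ergodicity of $(\Omega,\mu)$. Once that is in place, no delicate measurable selection is needed at the final stage, since the natural surjection from $H/\Stab_H(\mu_0)$ already lands in $(\mathbf{H}/\mathbf{L})(\kappa)$ and transports all equivariance properties; the remainder of the argument is standard algebraic bookkeeping.
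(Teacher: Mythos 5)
Your first half coincides with the paper's argument: factor through $W\backslash G\times\Omega$, use smoothness of the $H$-action on $\calM^1(\bbP(V))$ (Proposition \ref{prop:stabilizer:measure}) plus metric ergodicity of $W\backslash G$ (Proposition \ref{prop:metrically:ergodic}) and \cite[Corollary 6.9]{Bader:Gelander} to make the map into $\calM^1(\bbP(V))/H$ essentially constant, land in a single orbit $H\cdot\mu_0\cong H/\Stab_H(\mu_0)$, and then pass to the Zariski closure $\mathbf{L}$ of $\widetilde L:=\Stab_H(\mu_0)$ and project. All of that is exactly what the paper does.

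However, you omit the step that carries the real content of the paper's proof: showing that $\widetilde L$ is \emph{not} compact. As you have set things up, nothing prevents $\widetilde L$ from being a compact subgroup whose Zariski closure is all of $\mathbf{H}$ (over a non-archimedean $\kappa$ compact open subgroups of $\mathbf{H}(\kappa)$ are Zariski dense), in which case $\mathbf{L}=\mathbf{H}$, $(\mathbf{H}/\mathbf{L})(\kappa)$ is a point, and your map is the trivial one --- formally satisfying the statement but useless: the proof of Theorem \ref{teor:real:case} needs to feed this $\Phi$ into the initial coset representation and derive a contradiction with Zariski density via \cite[Proposition 3.3]{Bader:Furman:Unpub}, which requires the target $\mathbf{H}/\mathbf{L}$ to be a nontrivial coset space; it is precisely the non-compactness of $\widetilde L$, combined with the irreducibility of $V$ in Setup \ref{setup:real:case} (via the structure of stabilizers of projective measures in Proposition \ref{prop:stabilizer:measure}), that makes $\mathbf{L}$ a proper subgroup. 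The paper proves non-compactness by contradiction: if $\widetilde L$ were compact, $H/\widetilde L$ would carry an $H$-invariant metric, one forms the separable metric space $\upL^0(\Omega,H/\widetilde L)$ with the $\Gamma$-action twisted by $\sigma$, views $\overline{\Phi}$ as a $\Gamma$-equivariant map $W\backslash G\to \upL^0(\Omega,H/\widetilde L)$, and metric ergodicity of $W\backslash G$ forces it to be essentially constant; this yields a $\sigma$-equivariant map $\Omega\to H/\widetilde L$, contradicting the Zariski density (and unboundedness) of $\sigma$ by \cite[Proposition 3.3]{Bader:Furman:Unpub}. This use of metric ergodicity against an $L^0$-target is the missing idea in your proposal, and without it (or some substitute argument pinning down that $\mathbf{L}$ can be taken proper) the theorem you prove is too weak to serve its purpose in the paper.
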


\begin{proof}
Since the map $\Phi$ is $W$-invariant with respect to the first variable, we can define
$$
\overline{\Phi}:W \backslash G \times \Omega \rightarrow \mathcal{M}^1(\bbP(V)) \ .
$$

By Proposition \ref{prop:stabilizer:measure}, we know that $H$ acts on $\mathcal{M}^1(\bbP(V))$ in a smooth way, that is the quotient $\Sigma:=\mathcal{M}^1(\bbP(V))/H$ is countably separated. We can consider the composition of the map $\overline{\Phi}$ with the projection on the quotient $\Sigma$ to obtain the following map
$$
\widehat{\Phi}:W \backslash G \times \Omega \rightarrow \Sigma \ .
$$
The $\sigma$-equivariance of $\Phi$ implies that the map $\widehat{\Phi}$ is $\Gamma$-invariant. Since $W$ is not compact, Proposition \ref{prop:metrically:ergodic} implies that the $\Gamma$ action on $W \backslash G$ is metrically ergodic. The ergodicity of the probability space $(\Omega,\mu)$ together with \cite[Corollary 6.9]{Bader:Gelander} imply that the $\Gamma$-action on $W \backslash G \times \Omega$ is metrically ergodic, and hence ergodic. As a consequence the map $\widehat{\Phi}$ is essentially constant. Equivalently the map $\overline{\Phi}$ takes values in a unique $H$-orbit. This means that there exists some measure $\mu_0 \in \mathcal{M}^1(\bbP(V))$ such that
$$
\overline{\Phi}: W \backslash G \times \Omega \rightarrow H.\mu_0 \cong H/\textup{Stab}_H(\mu_0) \ ,
$$
where $\textup{Stab}_H(\mu_0)$ is the stabilizer of the measure $\mu_0$ and the isomorphism holds thanks to the smoothness of the $H$-action on $\mathcal{M}^1(\bbP(V))$. We denote by $L=\textup{Stab}_H(\mu_0)$. By Proposition \ref{prop:stabilizer:measure} we know that the stabilizer $L$ is a compact extension of the $\kappa$-points of some algebraic $\kappa$-group. 

We want to show that $L$ is not compact. Suppose by contradiction that $L$ is compact. As a consequence there exists a $H$-invariant metric $d_H$ on $H/L$. We know that the $\Gamma$-action on $W\backslash G$ is metrically ergodic. We consider the space $\upL^0(\Omega, H /L)$ of (equivalence classes) of measurable functions with the distance given by 
$$
d_\mu(f,g):=\int_\Omega \frac{d_H(f(s),g(s))}{1+d_H(f(s),g(s))} d\mu(s) \ .
$$ 
Notice that such distance is well-defined since the argument in the integral above is bounded and $\mu$ is a probability measure. We have a natural $\Gamma$-action on the separable space $\upL^0(\Omega,H/L)$ determined by the cocycle $\sigma$. More precisely, for every $\gamma \in \Gamma$ and every (equivalence class of) measurable function $f$, we define
\begin{equation}\label{eq:gamma:action:cocycle}
(\gamma . f)(s):=\sigma(\gamma,s)f(\gamma^{-1}.s) \ ,
\end{equation}
for almost every $s \in \Omega$. The function $\overline{\Phi}$ leads to a measurable function 
$$
\widetilde{\Phi}: W \backslash G \rightarrow \upL^0(\Omega,H/L) \ , \ \ (\widetilde{\Phi}(gW))(s):=\overline{\Phi}(gW,s) \ . 
$$
Additionally the $\sigma$-equivariance of the function $\overline{\Phi}$ implies that the function $\widetilde{\Phi}$ is $\Gamma$-equivariant with respect to the action defined by Equation \eqref{eq:gamma:action:cocycle}. By the metric ergodicity of $W \backslash G$ this means that $\widetilde{\Phi}$ is essentially constant. Equivalently the map $\overline{\Phi}$ does not depend on the first variable. In this way we obtain a measurable map $\Omega \rightarrow H/L$ defined by considering the essential image of the slice $\Phi_s(gW)=\Phi(gW,s)$.  We obtained a measurable map which is $\sigma$-equivariant, violating the Zariski density of the cocycle $\sigma$ (otherwise the cocycle would essentially take values in $L$, see \cite[Proposition 3.3]{Bader:Furman:Unpub}). This proves that $L$ cannot be compact.

If we now consider $\mathbf{L}=\overline{L}^Z$ the Zariski closure of $L$ in $\mathbf{H}$, we obtain a $\kappa$-subgroup of $\mathbf{H}$ \cite[Proposition 3.2.15]{zimmer:libro}. Up to composing $\Phi$ with the projection $H/L \rightarrow (\mathbf{H}/\mathbf{L})(\kappa)$, we obtain the desired map and the statement is proved. 
\end{proof}

The equivariant map obtained in Theorem \ref{teor:equivariant:map:real:case}, together with the algebraic representability of the ergodic action of $\Gamma \times T$ on $G \times \Omega$ will be the main ingredients to prove our main theorem in the real case. 

\begin{proof}[Proof of Theorem \ref{teor:real:case}]
Let $P<G$ be a minimal parabolic subgroup (say the one stabilizing the point at infinity in the upper-half space model) and consider $U<P$ its unipotent radical. We define 
$$
U'=U \cap W \ .
$$
Clearly the group $U'$ is not compact. By Lemma \ref{lem:algebraic:representation}, we have a $U'$-algebraic representation of the space $G \times \Omega$, that is there exist
\begin{itemize}
	\item a $\kappa$-algebraic subgroup $\mathbf{J} < \mathbf{H}$, 
	\item a continuous Zariski dense homomorphism $\tau:U' \rightarrow (N_\mathbf{H}(\mathbf{J}) /\mathbf{J})(\kappa)$,
	\item a measurable map $\Psi: G \times \Omega \rightarrow (\mathbf{H}/\mathbf{J})(\kappa)$ such that 
$$
\Psi(\gamma.g.u^{-1}, \gamma.s)=\sigma(\gamma,s)\Psi(g,s)\tau(u)^{-1} \ , 
$$
for every $\gamma \in \Gamma, u \in U'$ and almost every $g \in G, s \in \Omega$. 
\end{itemize}

Notice that by Lemma \ref{lem:same:map}, we know that $U'$ and $P$ admit the same map as algebraic representations (since $U' \vartriangleleft P$).  More precisely we can extend $\tau$ to a continuous homomorphism defined on $P$ mantaining the same map $\Psi$. 

We want to show that the subgroup $\mathbf{J}$ must be trivial. Suppose by contradiction that $\mathbf{J}$ is not trivial. We consider the measurable map $\Phi:G \times \Omega \rightarrow (\mathbf{H}/\mathbf{L})(\kappa)$ defined in Theorem \ref{teor:equivariant:map:real:case}. The fact that $\Psi$ is the initial object in the category of algebraic representations implies that we must have the following commutative diagram
\begin{equation}\label{eq:diagram:equivariant:map}
\xymatrix{
G \times \Omega \ar[rr]^{\Phi} \ar[d]^{\Psi} && (\mathbf{H}/\mathbf{L})(\kappa)  \\
(\mathbf{H}/\mathbf{J})(\kappa) \ar@{..>}[rru] \ar[rr] && (\mathbf{H}/\mathbf{J})(\kappa)/(\overline{\tau(U')})=(\mathbf{H}/\mathbf{J})(\kappa)/(\overline{\tau(U)}) \ar@{..>}[u]\ .
}
\end{equation}

Since $\Phi$ is $W$-invariant on the first factor, it is a \emph{fortiori} $U'$-invariant because $U'$ is a subgroup of $W$. Such invariance, together with the compatibility assumption on the pair $(\mathbf{H},\kappa)$, implies that $\Phi$ factors actually through $(\mathbf{H}/\mathbf{J})(\kappa)/(\overline{\tau(U)})$. This means that $\Phi$ is both $W$-invariant and $U$-invariant, thus it is $\langle W, U \rangle=G$-invariant. Equivalently, the measurable map $\Phi_s:G \rightarrow (\mathbf{H}/\mathbf{L})(\kappa), 	\ \Phi_s(g):=\Phi(g,s)$ is essentially constant for almost every $s \in \Omega$. In this way we obtain a measurable $\sigma$-equivariant map 
$$
\Omega \rightarrow (\mathbf{H}/\mathbf{L})(\kappa) \ ,
$$
and by \cite[Proposition 3.3]{Bader:Furman:Unpub} the cocycle $\sigma$ cannot be Zariski dense, leading to a contradiction. 

Since $\mathbf{J}$ must be trivial, we obtain a continuous Zariski dense homomorphism $\tau:P \rightarrow H$ and a measurable $(\Gamma \times P)$-equivariant map $\Psi:G \times \Omega \rightarrow H$, that is
$$
\Psi(\gamma.(g,s).p^{-1})=\sigma(\gamma,s)\Psi(g,s)\tau(p)^{-1} \ ,
$$
for every $\gamma \in \Gamma, p \in P$ and almost every $g \in G, s \in \Omega$. If we restrict $\tau$ to the maximal $\bbR$-split torus $A$ in $P$, we have that $\Psi$ is the map of the initial object in the category of $A$-algebraic representations of $G \times \Omega$. By Lemma \ref{lem:same:map} we can extend $\tau|_A$ to a continuous Zariski dense morphism $\tau':N_G(A) \rightarrow H$ in such a way that $\Psi$ is $(\Gamma \times N_G(A))$-equivariant. Here $N_G(A)$ is the normalizer of $A$ in $G$.

We now set $T_1:=P \ , \  T_2:=N_G(A)$ and similarly $\tau_1:=\tau\ ,\ \tau_2:=\tau'$. We have that $\Phi$ is $(\Gamma \times T_i)$-equivariant, that is
\begin{equation}\label{eq:phi:equivariance}
\Psi(\gamma.(g,s).t_i^{-1})=\sigma(\gamma,s)\Psi(g,s)\tau_i(t_i)^{-1} \ ,
\end{equation}
for every $\gamma \in \Gamma,t_i \in T_i$ for $i=1,2$ and almost every $g \in G, s \in \Omega$. Since the groups $T_1$ and $T_2$ generate $G$, that is $\langle T_1,T_2 \rangle =G$, we are in the right position to apply \cite[Lemma 5.1]{BF22} to the space $G$ with the right $G$-translation and to the measurable $s$-slice
$$
\Psi_s:G \rightarrow H \ , \ \ \Psi_s(g):=\Psi(g,s) \ ,
$$
for almost every $s \in \Omega$. By \cite[Lemma 5.1]{BF22} there must exist a continuous morphism 
$$
\Upsilon_s:G \rightarrow H \ , 
$$
such that $\Psi_s$ is $G$-equivariant in the following sense
\begin{equation}\label{eq:equivariance:slice:upsilon}
\Psi_s(gh^{-1})=\Psi_s(g)\Upsilon_s(h)^{-1} \ ,
\end{equation}
for every $h \in G$. A priori $\Upsilon_s$ may depend on $s \in \Omega$, but by \cite[Lemma 5.1]{BF22} the restriction of $\Upsilon_s$ to each subgroup $T_i$ must coincide with $\tau_i$, for $i=1,2$. Since $T_1,T_2$ generate $G$, the morphism $\Upsilon_s$ is uniquely determined and it does not depend on the $s$-variable. We are going to denote it by $\Upsilon:G \rightarrow H$. 

To conclude we follow the line of \cite[Theorem 1.3]{BF22}. We report here the details for sake of completness. We define
$$
\varphi:\Omega \rightarrow H \ , \ \ \ \varphi(s):=\Psi(g,s)\Upsilon(g)^{-1} \ .
$$
By the $G$-equivariance of $\Psi$, due to Equation \eqref{eq:equivariance:slice:upsilon}, the map above does not depend on $g \in G$. We can now look at 
\begin{equation}\label{eq:first:phi}
\Psi(\gamma.(g,s))=\sigma(\gamma,s)\Psi(g,s)=\sigma(\gamma,s)\varphi(s)\Upsilon(g) \ . 
\end{equation}
Similarly it holds
\begin{equation}\label{eq:second:phi}
\Psi(\gamma.(g,s))=\varphi(\gamma.s)\Upsilon(\gamma)\Upsilon(g) \ .
\end{equation}
By comparing Equation \eqref{eq:first:phi} and Equation \eqref{eq:second:phi} we obtain that
$$
\Upsilon(\gamma)=\varphi(\gamma.s)^{-1}\sigma(\gamma,s)\varphi(s) \ ,
$$
and the statement is proved. 
\end{proof}

In the next section we will see that the argument given above is not sufficient in the complex case. In fact it may happen that the pair $(\mathbf{H},\kappa)$ is not compatible and this corresponds exactly to the case when the subgroup $\mathbf{J}$ of the $P$-algebraic representation of $G \times \Omega$ is not trivial (one can think to $\mathbf{J}$ as a subgroup measuring the compatibility of the pair $(\mathbf{H},\kappa)$). 

\section{Superrigidity for complex cocycles}\label{sec:complex:case}

In this section we will focus our attention on the superrigidity statement in the complex case. We will see that the proof is identical to the one given in Section \ref{sec:real:case} if the pair $(\mathbf{H},\kappa)$ is compatible with the group $\textup{SU}(n,1)$. If the pair is not compatible, then the algebraic representability leads to a boundary map in the sense of \cite[Definition 2.9]{moraschini:savini:2}. Since the slices of such boundary map are Zariski dense and chain preserving, we can apply the same strategy of the proof of \cite[Theorem 2]{SS21} to get the desired trivialization for the starting cocycle.

We start by fixing the following

\begin{setup}\label{setup:complex:case}
 Consider $n \geq 2$ and $G=\textup{SU}(n,1)$. We assume that:
	\begin{itemize}
	\item $\Gamma \leq G$ is a lattice and $W<G$ is a simple non-compact Lie group;
	\item $(\Omega,\mu)$ is an ergodic standard Borel probability $\Gamma$-space;
	\item $\kappa$ is a local field, $\mathbf{H}$ is a connected simple $\kappa$-algebraic group with $\kappa$-points $H=\mathbf{H}(\kappa)$;
	\item $\sigma:\Gamma \times \Omega \rightarrow H$ is a Zariski dense measurable cocycle with unbounded image;
	\item there exists an irreducible $\kappa$-representation of $\mathbf{H}$ on a $\kappa$-vector space $V$;
	\item $\Phi:G \times \Omega \rightarrow \mathcal{M}^1(\bbP(V) )$ is a measurable map such that $\Phi(\gamma.g,\gamma.s)=\sigma(\gamma,s)_\ast \Phi(g,s)$ and $\Phi(g,s)$ for every $\gamma \in \Gamma$ and almost every $g \in G, s \in \Omega$, and $\Phi$ is $W$-invariant with respect to the first variable. 
	\end{itemize}
\end{setup}

\begin{teor} \label{teor:equivariant:map:complex:case}
Under the assumptions of Setup \ref{setup:complex:case}, there exists an algebraic $\kappa$-subgroup $\mathbf{L}$ of $\mathbf{H}$ and a measurable $\sigma$-equivariant map $\Phi: G \times \Omega \rightarrow (\mathbf{H}/\mathbf{L})(\kappa)$. Additionally $\Phi$ is $W$-invariant with respect to the first factor.  
\end{teor}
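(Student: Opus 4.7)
The plan is to mimic verbatim the argument used to establish Theorem \ref{teor:equivariant:map:real:case}, since none of the ingredients in that proof actually used the specific real-hyperbolic structure of $G=\textup{SO}^\circ(n,1)$: they only relied on $G$ being a simple Lie group with finite center and no compact factors, $W<G$ being a closed non-compact subgroup, $(\Omega,\mu)$ being ergodic, and the target being a space of probability measures on a projective space. All of these hypotheses are still in force in Setup \ref{setup:complex:case}, and crucially the compatibility of $(\mathbf{H},\kappa)$ with $G$ played no role in the proof of Theorem \ref{teor:equivariant:map:real:case}.

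More precisely, I would first use the $W$-invariance of $\Phi$ in its first coordinate to factor it as $\overline{\Phi}\colon W\backslash G\times \Omega\to \mathcal{M}^1(\mathbb{P}(V))$. By Proposition \ref{prop:stabilizer:measure} the $H$-action on $\mathcal{M}^1(\mathbb{P}(V))$ is smooth, so the quotient $\Sigma=\mathcal{M}^1(\mathbb{P}(V))/H$ is countably separated. The composition $\widehat{\Phi}\colon W\backslash G\times\Omega\to\Sigma$ is $\Gamma$-invariant by $\sigma$-equivariance of $\Phi$. Since $W$ is non-compact in the simple group $G=\textup{SU}(n,1)$, Proposition \ref{prop:metrically:ergodic} gives metric ergodicity of the $\Gamma$-action on $W\backslash G$, which combined with the ergodicity of $(\Omega,\mu)$ and \cite[Corollary 6.9]{Bader:Gelander} yields metric ergodicity (hence ergodicity) of the $\Gamma$-action on $W\backslash G\times \Omega$. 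Thus $\widehat{\Phi}$ is essentially constant, which means $\overline{\Phi}$ lands in a single $H$-orbit $H.\mu_0\cong H/\widetilde{L}$, where $\widetilde{L}=\textup{Stab}_H(\mu_0)$ is, by Proposition \ref{prop:stabilizer:measure}, a compact extension of the $\kappa$-points of some $\kappa$-algebraic group.

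The next step is to rule out compactness of $\widetilde{L}$. If $\widetilde{L}$ were compact, $H/\widetilde{L}$ would carry an $H$-invariant metric, and one could transport $\overline{\Phi}$ into the separable Polish space $\mathrm{L}^0(\Omega,H/\widetilde{L})$ equipped with the $\Gamma$-action twisted by the cocycle $\sigma$ exactly as in Equation~\eqref{eq:gamma:action:cocycle}. Metric ergodicity of $W\backslash G$ would then force $\overline{\Phi}$ to be independent of the first variable, producing a $\sigma$-equivariant measurable map $\Omega\to H/\widetilde{L}$ and contradicting Zariski density of $\sigma$ via \cite[Proposition 3.3]{Bader:Furman:Unpub}. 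Hence $\widetilde{L}$ is non-compact, and taking its Zariski closure $\mathbf{L}=\overline{\widetilde{L}}^Z$ gives a $\kappa$-subgroup of $\mathbf{H}$ by \cite[Proposition 3.2.15]{zimmer:libro}. Composing $\overline{\Phi}$ with the canonical projection $H/\widetilde{L}\to (\mathbf{H}/\mathbf{L})(\kappa)$ and lifting to $G\times\Omega$ yields the desired $\sigma$-equivariant, $W$-invariant map.

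There is really no substantive obstacle here: the only point to double-check is that $G=\textup{SU}(n,1)$ satisfies the hypotheses of Proposition \ref{prop:metrically:ergodic} (semi-simple Lie group with finite center, no compact factors, and $G/W$ having no precompact image in any proper quotient), which is immediate from the fact that $G$ is simple and $W$ is non-compact. In particular the proof proceeds without invoking any compatibility assumption, so the theorem holds under Setup \ref{setup:complex:case} as stated, and the role of compatibility versus the non-compatible case $\mathbf{H}(\kappa)=\textup{PU}(n,1)$ will only enter in the subsequent analysis that uses this map to deduce triviality of the subgroup $\mathbf{J}$ coming from algebraic representability.
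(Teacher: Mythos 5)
Your proposal is correct and coincides with the paper's treatment: the paper proves this theorem simply by adapting the proof of Theorem \ref{teor:equivariant:map:real:case} \emph{mutatis mutandis}, which is precisely the argument you spell out. Your observation that compatibility plays no role and that $G=\textup{SU}(n,1)$ still satisfies the hypotheses needed for metric ergodicity is exactly why that adaptation goes through.
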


\begin{proof}
It is sufficient to adapt \emph{mutatis mutandis} the proof of Theorem \ref{teor:equivariant:map:real:case} to this context. 
\end{proof}

Now we have an equivariant map given by Theorem \ref{teor:equivariant:map:complex:case}. We want to use such a map together with the theory of algebraic representability of ergodic actions. This time we will face a crucial difference with respect to the proof of Theorem \ref{teor:real:case}. The group $\mathbf{J}$ appearing in that proof can be not trivial in this setting. The non-triviality of $\mathbf{J}$ will exactly happen when the pair $(\mathbf{H},\kappa)$ is not compatible with $G$. 

\begin{prop}\label{prop:j:non:trivial}
Under the assumptions of Setup \ref{setup:complex:case}, let $U'<W$ be a non-trivial unipotent subgroup. Consider a $U'$-algebraic representation of $G \times \Omega$ given by the triple $(\mathbf{J},\tau,\Psi)$.  Then $\tau$ can be extended to continuous Zariski dense morphism $\tau':P \rightarrow (N_\mathbf{H}(\mathbf{J})/\mathbf{J})(\kappa)$ such that $\Psi$ is a measurable $(\Gamma \times P)$-equivariant map. Additionally, if $\mathbf{J}$ is not trivial then the pair $(\mathbf{H},\kappa)$ is not compatible. 
\end{prop}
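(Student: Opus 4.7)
The proposition splits into an extension-of-$\tau$ statement and a compatibility obstruction, which I address in turn.

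\emph{Extension of $\tau$ to $P$.} Since $G = \textup{SU}(n,1)$ has real rank one, every non-trivial unipotent subgroup lies in a conjugate of the unipotent radical of a minimal parabolic. Replacing $P$ by an appropriate conjugate, I may therefore assume $U' \subset U$, where $U$ is the unipotent radical of $P$, a $(2n-1)$-dimensional Heisenberg group with one-dimensional centre $Z$ and abelian quotient $U/Z$ (Section \ref{sec:compatibility}). The chain
$$
U' \,\vartriangleleft\, U'Z \,\vartriangleleft\, U \,\vartriangleleft\, P
$$
consists of normal inclusions: (i) $Z$ is central in $U$ and hence normalises $U'$; (ii) $U'Z/Z$ is a subgroup of the abelian group $U/Z$; (iii) $U$ is the unipotent radical of $P$. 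Iterating Lemma \ref{lem:same:map} along this chain, the initial coset $U'$-algebraic representation $(\mathbf{J}, \tau, \Psi)$ coincides with the initial coset $P$-algebraic representation: in particular the map $\Psi$ is preserved and $\tau$ extends to a continuous Zariski dense homomorphism $\tau' : P \to (N_{\mathbf{H}}(\mathbf{J})/\mathbf{J})(\kappa)$ making $\Psi$ a $(\Gamma \times P)$-equivariant map.

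\emph{Non-triviality of $\mathbf{J}$ forbids compatibility.} I establish the contrapositive: if $(\mathbf{H}, \kappa)$ is compatible with $G$, then $\mathbf{J}$ is trivial. Assume compatibility holds and, toward a contradiction, that $\mathbf{J}$ is non-trivial. Theorem \ref{teor:equivariant:map:complex:case} furnishes a $W$-invariant, $\sigma$-equivariant measurable map $\Phi : G \times \Omega \to (\mathbf{H}/\mathbf{L})(\kappa)$. The initial-object property of the coset representation $\Psi$ yields a factorisation of $\Phi$ through a morphism $(\mathbf{H}/\mathbf{J})(\kappa) \to (\mathbf{H}/\mathbf{L})(\kappa)$. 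Since $\Phi$ is $W$-invariant and $U' \subset W$, this morphism descends further through the quotient by the Zariski closure $\overline{\tau'(U')}^{Z}$. Compatibility (Definition \ref{def:compatibility}) then forces $\overline{\tau'(U')}^{Z} = \overline{\tau'(U)}^{Z}$, so $\Phi$ is also $U$-invariant on the first variable. Combined with $W$-invariance, $\Phi$ is $\langle W, U \rangle = G$-invariant on the first factor, so almost every slice $\Phi_s$ is essentially constant. This produces a measurable $\sigma$-equivariant map $\Omega \to (\mathbf{H}/\mathbf{L})(\kappa)$ with $\mathbf{L}$ a proper algebraic subgroup, contradicting the Zariski density of $\sigma$ via \cite[Proposition 3.3]{Bader:Furman:Unpub}. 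This is exactly the strategy already exploited in the proof of Theorem \ref{teor:real:case}.

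\emph{Main obstacle.} The essential novelty compared with the real case is that $U$ is non-abelian (Heisenberg) rather than abelian, so one cannot place $U'$ directly as a normal subgroup of $U$; inserting $U'Z$ as an intermediate step is the natural exploitation of the two-step nilpotent structure. Once the extension has been obtained, the remainder of the argument is a direct transfer of the real-case proof, and the genuine additional difficulty, as foreshadowed in the introduction, arises only in the complementary setting where $\mathbf{J}$ is non-trivial and one must analyse the slices of the associated boundary map.
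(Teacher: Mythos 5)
Your proof is correct and follows essentially the same route as the paper: Lemma \ref{lem:same:map} to extend $\tau$ from $U'$ to $P$ while keeping the same map $\Psi$, and then, assuming $\mathbf{J}$ non-trivial and compatibility, the factorization of the $W$-invariant map from Theorem \ref{teor:equivariant:map:complex:case} through the quotient by $\overline{\tau(U)}$ to force $G$-invariance of the slices and contradict Zariski density, exactly as in the proof of Theorem \ref{teor:real:case}. Your one deviation is a welcome refinement: the paper invokes Lemma \ref{lem:same:map} directly with the assertion $U' \vartriangleleft P$, whereas you conjugate $U'$ into $U$ and pass through the subnormal chain $U' \vartriangleleft U'Z \vartriangleleft U \vartriangleleft P$, which is what the lemma actually requires, since a general non-trivial unipotent subgroup of $W$ need not be normal in $P$ (nor even in the Heisenberg group $U$).
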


\begin{proof}
Except for the non-compatibility statement, this is exactly the first part of the proof of Theorem \ref{teor:real:case}. We quickly review it. As a consequence of Lemma \ref{lem:algebraic:representation} there must exist
\begin{itemize}
  \item a $\kappa$-algebraic subgroup $\mathbf{J}<\mathbf{H}$,
  \item a continuous Zariski dense morphism $\tau:U' \rightarrow (N_\mathbf{H}(\mathbf{J})/\mathbf{J})(\kappa)$,
  \item a measurable map $\Psi:G \times \Omega \rightarrow (\mathbf{H}/\mathbf{J})(\kappa)$ which is $(\Gamma \times U')$-equivariant, that is
$$
\Psi(\gamma.(g,s).u^{-1})=\sigma(\gamma,s)\Psi(g,s)\tau(u)^{-1} \ ,
$$
for every $\gamma \in \Gamma, u \in U'$ and almost every $g \in G,s \in \Omega$. 
\end{itemize}

By Lemma \ref{lem:same:map}, we know that $U'$ and $P$ admit the same map as algebraic representations (since $U' \vartriangleleft P$).  This precisely means that we can extend $\tau$ to a continuous homomorphism defined on $P$ mantaining the same map $\Psi$. 

We are left to show that if the subgroup $\mathbf{J}$ is not trivial, then the pair $(\mathbf{H},\kappa)$ cannot be compatible. By contradiction suppose that $(\mathbf{H},\kappa)$ is compatible. Theorem \ref{teor:equivariant:map:complex:case} guarantees the existence of measurable $\sigma$-equivariant $W$-invariant map $\Phi:G \times \Omega \rightarrow (\mathbf{H}/\mathbf{L})(k)$, for some $\kappa$-algebraic subgroup $\mathbf{L}< \mathbf{H}$. Additionally $\Phi$ fits in a commutative diagram which is analogous to Diagram \eqref{eq:diagram:equivariant:map}. Like in the proof of Theorem \ref{teor:real:case}, we argue that $\Psi$ must be $G$-invariant and thus $\sigma$ cannot be Zariski dense, getting a contradiction. This proves the statement and concludes the proof. 
\end{proof}

Before giving the full proof of Theorem \ref{teor:complex:case} we need to handle the case when $\mathbf{J}$ is not trivial, or equivalently when the pair $(\mathbf{H},\kappa)$ is not compatible. 

\begin{prop}\label{prop:rigidity:boundary:map}
Let $G=\textup{SU}(n,1)$ for $n \geq 2$ and let $\Gamma < G$ be a lattice. Let $H=\textup{PU}(n,1)$, $Q<H$ be a minimal parabolic subgroup and let $\overline{Z}<Q$ be its center. Consider an ergodic standard Borel probability $\Gamma$-space $(\Omega,\mu)$ and a Zariski dense cocycle $\sigma:\Gamma \times \Omega \rightarrow H$ with unbounded image. Suppose that there exist
\begin{itemize}
	\item a continuous homomorphism $\tau:P \rightarrow Q/\overline{Z}$ with one-dimensional kernel,
	\item a measurable map $\phi:G \times \Omega \rightarrow H/\overline{Z}$ which is $(\Gamma \times P)$-equivariant, that is
$$
\phi(\gamma.(g,s).p^{-1})=\sigma(\gamma,s)\phi(g,s)\tau(p)^{-1} \ ,
$$
for every $\gamma \in \Gamma, p \in P$ and almost every $g \in G,s \in \Omega$. 
\end{itemize}
Then $\sigma$ is trivializable. 
\end{prop}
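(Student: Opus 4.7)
The plan is to convert $\phi$ into an honest boundary map $\partial_\infty \bbH^n_{\bbC} \times \Omega \to \partial_\infty \bbH^n_{\bbC}$, show its slices are Zariski dense and chain preserving, and then invoke the argument of \cite[Theorem~2]{SS21}.

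First, I would project $\phi$ to the full boundary. Consider the natural projection $\pi_Q \colon H/\overline{Z} \to H/Q \cong \partial_\infty \bbH^n_{\bbC}$. Since $\tau(P) \subset Q/\overline{Z}$, right multiplication by any element of $\tau(P)$ becomes trivial after applying $\pi_Q$. Thus $\pi_Q \circ \phi$ is invariant under the right $P$-action on the first coordinate and $\sigma$-equivariant in the $\Gamma$-action, so it factors through $G/P \times \Omega = \partial_\infty \bbH^n_{\bbC} \times \Omega$ as a measurable map
$$\overline{\phi} \colon \partial_\infty \bbH^n_{\bbC} \times \Omega \to \partial_\infty \bbH^n_{\bbC}.$$
By Proposition \ref{prop:slice:zariski:dense}, the slice $\overline{\phi}_s$ has Zariski dense essential image for almost every $s \in \Omega$, and in particular is essentially injective.

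Second, I would establish that almost every slice $\overline{\phi}_s$ is chain preserving. Using the characterization recalled in Section \ref{sec:compatibility}, since $\tau$ has one-dimensional kernel one has $\ker\tau = CZ$, $\tau$ is surjective, and after precomposition with an inner automorphism of $P$ we may assume $\tau(P \cap D) = \theta(P \cap D)$, where $D$ is the stabilizer of the plane through $e_1, e_{n+1}$. Consider the further projection $\pi_{\mathcal{C}_1} \colon H/\overline{Z} \to H/(Q \cap \overline{D}) \cong \mathcal{C}_1$. By the same argument as above, $\tau(P \cap D)$ is killed by this projection, so $\pi_{\mathcal{C}_1} \circ \phi$ descends to a measurable map
$$\overline{\psi} \colon (P \cap D)\backslash G \times \Omega \;=\; \mathcal{C}_1 \times \Omega \;\to\; \mathcal{C}_1.$$
The two projections $H/\overline{Z} \twoheadrightarrow H/Q$ and $H/\overline{Z} \twoheadrightarrow H/(Q \cap \overline{D})$ intertwine with $\pi \colon (\partial_\infty \bbH^n_{\bbC})^{(2)} \to \mathcal{C}_1$ of Section \ref{sec:standard:subgroups}, so $\overline{\phi}$ and $\overline{\psi}$ fit into the commutative square of Definition \ref{def:chain:preserving} slice-wise, giving the chain preserving property.

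Third, I would close the argument following the strategy of \cite[Theorem~2]{SS21}. Having a Zariski dense, chain preserving, $\sigma$-equivariant boundary map $\overline{\phi}$, Pozzetti's rigidity for chain preserving self-maps of $\partial_\infty \bbH^n_{\bbC}$ forces each slice $\overline{\phi}_s$ to coincide almost everywhere with the boundary extension of a unique element $f(s) \in \textup{PU}(n,1)$. The measurable selection of $f \colon \Omega \to H$ combined with the $\sigma$-equivariance of $\overline{\phi}$ gives the cohomology relation $\sigma(\gamma, s) = f(\gamma s)^{-1} \rho(\gamma) f(s)$ for a homomorphism $\rho \colon \Gamma \to H$, witnessing trivializability.

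The main obstacle is the chain preserving step: one must verify that the internal structure identified by Section \ref{sec:compatibility} (the identification $\tau(P \cap D) = \theta(P \cap D)$ and the intertwining of the two quotient maps of $H/\overline{Z}$ with $\pi$) is exactly what is needed to induce the auxiliary map $\overline{\psi}$ fitting into the chain-preserving diagram. Once this is achieved, the last step reduces to invoking the existing rigidity mechanism of \cite{SS21} essentially verbatim.
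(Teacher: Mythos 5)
Your overall plan follows the paper's own route: project to the boundary, prove Zariski density and chain preservation of the slices, and then run the argument of \cite[Theorem~2]{SS21}. However, there are two genuine gaps.

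The first gap is the claim that essential injectivity of the slices follows from Zariski density of their essential image ("\ldots has Zariski dense essential image \ldots and in particular is essentially injective"). This implication is false: Zariski density controls the size of the image, not the size of the fibers (think of a dominant two-to-one map, whose image is certainly Zariski dense). The paper handles this with a separate ergodicity argument. One forms the $\Gamma$-invariant measurable set
\begin{equation*}
E = \bigl\{ (\xi,\eta,s) \in (\partial_\infty \bbH^n_{\bbC})^2 \times \Omega \ : \ \widehat{\phi}_s(\xi)=\widehat{\phi}_s(\eta) \bigr\} \ ,
\end{equation*}
uses ergodicity of the diagonal $\Gamma$-action on $(\partial_\infty \bbH^n_{\bbC})^2 \times \Omega$ (via \cite[Proposition 3.3]{MonShal0}) to get a 0--1 law for $\mu(E)$, and shows $E$ cannot have full measure because that would force the slices to be essentially constant, contradicting Zariski density of $\sigma$ via the standard argument (an essentially constant equivariant map would put $\sigma$ inside a proper algebraic subgroup). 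You need to add this step before you can even define $\widehat{\phi}_s^{(2)}$.

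The second gap is in the final step. Pozzetti's Theorem 1.6, which you invoke, delivers only that each slice $\overline{\phi}_s$ is \emph{rational}; it does \emph{not} say that the slice is the boundary extension of an element of $\textup{PU}(n,1)$, and it does not by itself produce a measurable selection $f \colon \Omega \to H$ or a homomorphism $\rho \colon \Gamma \to H$. To pass from "almost every slice is rational" to trivializability one must use the smoothness of the joint $(\textup{SU}(n,1)\times\textup{PU}(n,1))$-action on the space $\Sigma=\textup{Rat}(\partial_\infty\bbH^n_{\bbC},\partial_\infty\bbH^n_{\bbC})$ of rational self-maps (Proposition \ref{prop:smooth:action:rational}), the ergodicity of $\Omega$ to force the map $s \mapsto \widehat\phi_s$ into a single $\textup{SU}(n,1)\times\textup{PU}(n,1)$-orbit, a Borel density argument to show the $\textup{SU}(n,1)$-stabilizer is all of $\textup{SU}(n,1)$, and finally the triviality of the pointwise stabilizer of a Zariski dense rational map to conclude that $\Upsilon(\gamma,s)=\varphi(\gamma.s)^{-1}\sigma(\gamma,s)\varphi(s)$ is actually independent of $s$ and hence defines a homomorphism. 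Your proposal compresses all of this into a single sentence; it is not merely a matter of brevity but a missing argument, since without the orbit and stabilizer analysis there is no way to produce $\rho$ or to know that the cohomology relation holds.
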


\begin{proof}
We consider the composition of the map $\phi$ with the projection $H/\overline{Z} \rightarrow H/Q$. We denote by $\overline{\phi}$ such composition. Notice that the fact that $\phi$ is $P$-equivariant, implies that $\overline{\phi}$ is $P$-invariant (because the image of $\tau$ lies in $Q/ \overline{Z}$). As a consequence $\overline{\phi}$ induces another measurable map  
$$
\widehat{\phi}:P \backslash G \times \Omega \rightarrow H/Q \ . 
$$
Notice that both the quotients $P \backslash G$ and $H/Q$ can be identified with the boundary at infinity $\partial_\infty \bbH^n_{\bbC}$. Moreover, the $\Gamma$-equivariance of $\phi$ implies that 
$$
\widehat{\phi}(\gamma.\xi,\gamma.s)=\sigma(\gamma,s)\widehat{\phi}(\xi,s) \ ,
$$
for every $\gamma \in \Gamma$ and almost every $\xi \in \partial_\infty \bbH^n_{\bbC}, s \in \Omega$. This means exactly that $\widehat{\phi}$ is a boundary map in the sense of \cite[Definition 2.9]{moraschini:savini:2}. If we now show that the slice
$$
\widehat{\phi}_s:\partial_\infty \bbH^n_{\bbC} \rightarrow \partial_\infty \bbH^n_{\bbC} \ , \ \ \widehat{\phi}_s(\xi):=\widehat{\phi}(\xi,s) \ ,
$$ 
has Zariski dense essential image and it is chain preserving, then we can apply the same strategy of \cite[Theorem 2]{SS21} to conclude.  

Since $\sigma$ is Zariski dense and $(\Omega,\mu)$ is ergodic, the Zariski density of the essential image of $\widehat{\phi}_s$ follows directy by Proposition \ref{prop:slice:zariski:dense}. 

We are left to show that $\widehat{\phi}_s$ is chain preserving for almost every $s \in \Omega$. We first show that it is essentially injective. Consider the measurable set
$$
E:=\{ (\xi,\eta,s) \in (\partial_\infty \bbH^n_{\bbC})^2 \times \Omega \ | \ \widehat{\phi}_s(\xi)=\widehat{\phi}_s(\eta) \} \ .
$$
By the $\sigma$-equivariance of $\widehat{\phi}$ we have that the set $E$ is a $\Gamma$-invariant measurable subset of $(\partial_\infty \bbH^n_{\bbC})^2 \times \Omega$. Since the diagonal action of $\Gamma$ on $(\partial_\infty \bbH^n_{\bbC})^2 \times \Omega$ is ergodic \cite[Proposition 3.3]{MonShal0}, we have that $E$ has either full or null measure. We claim that $E$ must have null measure. By contradiction suppose that $E$ has full measure. In that case we must have
$$
\widehat{\phi}_s(\xi)=\widehat{\phi}_s(\eta) \ ,
$$
for almost every $\xi,\eta \in \partial_\infty \bbH^n_{\bbC},s \in \Omega$. Thus $\widehat{\phi}_s$ is essentially constant for almost every $s \in \Omega$. In this way we obtain a measurable map $\Omega \rightarrow \partial_\infty \bbH^n_{\bbC}$ which is $\sigma$-equivariant. By \cite[Proposition 3.3]{Bader:Furman:Unpub} the cocycle $\sigma$ cannot be Zariski dense. This is a contradiction and we must have that $E$ is of null measure. As a consequence 
$
\widehat{\phi}_s(\xi) \neq \widehat{\phi}_s(\eta) \ ,
$
for almost every $\xi,\eta \in \partial_\infty \bbH^n_{\bbC},s \in \Omega$, or equivalently $\widehat{\phi}_s$ is essentially injective for almost every $s \in \Omega$. The essential injectivity implies that $\widehat{\phi}_s$ restricts to a well-defined map 
$$
\widehat{\phi}_s^{(2)}: (\partial_\infty \bbH^n_{\bbC})^{(2)} \rightarrow (\partial_\infty \bbH^n_{\bbC})^{(2)} \ ,
$$
on the subset $(\partial_\infty \bbH^n_{\bbC})^{(2)}$ of distinct points in $(\partial_\infty \bbH^n_{\bbC})^{2}$.  

Now we prove that $\widehat{\phi}_s$ is chain preserving. We follow the line of the proof of \cite[Proposition 6.1]{BFMS:Unpub}. Following the notation of Section \ref{sec:hyperbolic:groups}, recall that $C_{\xi,\eta}$ is the unique chain passing through two distinct points $\xi,\eta \in \partial_\infty \bbH^n_{\bbC}$ and $\mathcal{C}_1$ is the space parametrizing the pairs $(C,\eta)$ where $C$ is a chain and $\eta \in C$. We also have the natural projection
$$
\pi:(\partial_\infty \bbH^n_{\bbC})^{(2)} \rightarrow \mathcal{C}_1 \ , \ \ \pi(\xi,\eta):=(C_{\xi,\eta},\xi) \ .
$$
We can identify $(\partial_\infty \bbH^n_{\bbC})^{(2)}$ with the quotient $MA\backslash G$ and similarly $\mathcal{C}_1$ with the quotient $(H/\overline{Z})/\theta(P \cap D)$, where the latter is a consequence of the fact $\theta(P \cap D)=\tau(P \cap D)$ (see Section \ref{sec:compatibility}). 

The same reasoning used to prove \cite[Proposition 6.1]{BFMS:Unpub} shows the existence of the dashed arrows $\psi_s$ and $\widehat{\psi}_s$ in the following commutative diagram
\begin{equation}\label{eq:diagram:dashed:arrows}
\xymatrix{
G \ar[d] &  &&   & \\
MA \backslash G \ar[r]^{\cong} \ar[d] & (\partial_\infty \bbH^n_{\bbC})^{(2)} \ar[rr]^{\widehat{\phi}_s^{(2)}}  \ar[d]^{\pi} && (\partial_\infty \bbH^n_{\bbC})^{(2)} \ar[d]^{\pi} & \\
(P\cap D) \backslash G \ar[r]^{\cong} \ar@{..>}@/_2pc/[rrrr]_{\psi_s} & \mathcal{C}_1 \ar@{..>}[rr]^{\widehat{\psi}_s} && \mathcal{C}_1 \ar[r]^{\cong} & (H/\overline{Z})/\theta(P \cap D) \ .
}
\end{equation}

More precisely, the $\tau$-equivariance of the map $\phi$ (and of the associated slice $\phi_s$) and the fact that $\theta(P \cap D)=\tau(P \cap D)$ (see Section \ref{sec:compatibility}), imples that the composition $G \rightarrow (H/\overline{Z})\slash \theta(P \cap D)$ in the Diagram \ref{eq:diagram:dashed:arrows} factors through $(P \cap D) \backslash G$. This leads to the definition of $\psi_s$ and composing with the other two isomorphisms we get the map $\widehat{\psi}_s$. In particular we obtain a map $\widehat{\psi}_s:\mathcal{C}_1 \rightarrow \mathcal{C}_1$ such that the following diagram is commutative:
$$
\xymatrix{
(\partial_\infty \bbH^n_{\bbC})^{(2)} \ar[rr]^{\widehat{\phi}_s^{(2)}}  \ar[d]^{\pi} && (\partial_\infty \bbH^n_{\bbC})^{(2)} \ar[d]^{\pi} \\
\mathcal{C}_1 \ar@{..>}[rr]^{\widehat{\psi}_s} && \mathcal{C}_1 \ .
}
$$

This exactly means that the slice $\widehat{\phi}_s$ is chain preserving. Being Zariski dense and chain preserving, we can apply \cite[Theorem 1.6]{Pozzetti} to deduce that $\widehat{\phi}_s$ is rational (in the sense given in Section \ref{sec:smoothness}) for almost every $s \in \Omega$. 

Now we can conclude using the same argument contained in the proof of \cite[Theorem 2]{SS21}. We report the main ideas of the proof for sake of completeness, but we suggest the reader to look at \cite[Theorem 2]{SS21} for more details. Consider the map 
$$
\Phi:\Omega \rightarrow \Sigma:=\textup{Rat}(\partial_\infty \bbH^n_{\bbC},\partial_\infty \bbH^n_{\bbC}) \ , \ \ \Phi(s):=\widehat{\phi}_s \ . 
$$
where $\Sigma$ is the space of rational functions between boundaries. By Section \ref{sec:smoothness}, we know that the joint $(\textup{SU}(n,1) \times \textup{PU}(n,1))$-action on $\Sigma$ given by 
$$
(g,h).\phi(\xi)=h\phi(g^{-1}\xi) \ , \ \ g,h \in G, \xi \in \partial_\infty \bbH^n_{\bbC} \ 
$$
is smooth. Notice that the $\sigma$-equivariance of $\phi$ implies that the map $\Phi$ satisfies
$$
\Phi(\gamma.s)=\sigma(\gamma.s)(\gamma.\Phi)(s) \ .
$$
As a consequence the map 
$$
\overline{\Phi}:\Omega \rightarrow \Sigma / \textup{PU}(n,1) \ ,
$$
is $\Gamma$-equivariant and the map 
$$
\widetilde{\Phi}:\Omega \rightarrow \Sigma / \textup{SU}(n,1) \times \textup{PU}(n,1) \ 
$$
is $\Gamma$-invariant. Since the quotient on the right is countably separated (by the smoothness of the joint action) and the space $(\Omega,\mu)$ is $\Gamma$-ergodic, the map $\widetilde{\Phi}$ is essentially constant. Equivalently the map $\overline{\Phi}$ takes values essentially in a unique $\textup{SU}(n,1)$-orbit. 

Let $s_0 \in \Omega$ be a point such that $\overline{\Phi}$ takes values in $\textup{SU}(n,1) \cdot \overline{\Phi}(s_0)$. We set 
$$
G_0:=\textup{Stab}_{\textup{SU}(n,1)}(\overline{\Phi}(s_0)) \ .
$$
The latter is algebraic by \cite[Proposition 3.3.2]{zimmer:libro} and we have a $\Gamma$-equivariant map 
$$
\overline{\Phi}: \Omega \rightarrow \textup{SU}(n,1) \cdot \overline{\Phi}(s_0) \cong \textup{SU}(n,1)/G_0 \ .
$$
As a consequence, the quotient $\textup{SU}(n,1)/G_0$ supports a $\Gamma$-invariant probability measure obtained by pushing forward the measure $\mu$ through the map $\overline{\Phi}$. By \cite[Lemma 5.1]{SS21} and by the Borel Density Theorem \cite[Theorem 3.2.5]{zimmer:libro} the group $G_0$ must coincide with $\textup{SU}(n,1)$. In this way, we deduce that $\Phi$ takes essentially values in a single $\textup{PU}(n,1)$-orbit. Equivalently, there must exist some Zariski dense rational function $\phi_0 \in \Sigma$ such that 
$$
\Phi: \Omega \rightarrow \textup{PU}(n,1) \cdot \phi_0 \cong \textup{PU}(n,1)/\textup{Stab}_{\textup{PU}(n,1)}(\phi_0) \  .
$$
By composing with a measurable section $\textup{PU}(n,1)/\textup{Stab}_{\textup{PU}(n,1)}(\phi_0) \rightarrow \textup{PU}(n,1)$ \cite[Corollary A.8]{zimmer:libro} we obtain a measurable map 
$$
\varphi:\Omega \rightarrow \textup{PU}(n,1) \ ,
$$
such that
$$
\Phi(s)=\varphi(s)\phi_0 \ .
$$
By setting
$$
\Upsilon:\Gamma \times \Omega \rightarrow \textup{PU}(n,1) \ , \ \ \ \Upsilon(\gamma,s):=\varphi(\gamma.s)^{-1}\sigma(\gamma,s)\varphi(s) \ , 
$$
the $\sigma$-equivariance of $\Phi$ implies that 
$$
\Upsilon(\gamma,s_1)\phi_0(\xi)=\phi_0(\gamma^{-1}\xi)=\Upsilon(\gamma,s_2)\phi_0(\xi) \ ,
$$
for almost every $\xi \in \partial_\infty \bbH^n_{\bbC}$ and almost every $s_1,s_2 \in \Omega$. Equivalently the product $\Upsilon(\gamma,s_1)^{-1}\Upsilon(\gamma,s_2)$ lies in the pointwise stabilizer of the essential image of $\phi_0$. By the Zariski density of $\phi_0$, the pointwise stabilizer of the essential image coincides with the pointwise stabilizer of the whole boundary $\partial_\infty \bbH^n_{\bbC}$. Since the pointwise stabilizer of $\partial_\infty \mathbb{H}^n_{\bbC}$ is trivial, we have that $\Upsilon$ does not depend on $s \in \Omega$. As a consequence we have found a morphism 
$$
\Upsilon:\Gamma \rightarrow \textup{PU}(n,1) 
$$
whose cocycle is cohomologous to $\sigma$. This proves the trivializability of $\sigma$ and concludes the proof. 
\end{proof}

\begin{oss}
Notice that the morphism $\Upsilon:\Gamma \rightarrow \textup{PU}(n,1)$ we obtained at the end of the previous proof admits a boundary map which is Zariski dense and chain preserving. As a consequence of this, $\Upsilon$ is superrigid and hence it can be extended to a morphism $\widehat{\Upsilon}$ of the ambient group. Equivalently, the cocycle we started with is trivializable to a representation $\widehat{\Upsilon}$ coming from the ambient group by restriction. 
\end{oss}

We are finally ready to prove the main theorem of the section.

\begin{proof}[Proof of Theorem \ref{teor:complex:case}]

Let $U'<W$ be a non-trivial unipotent subgroup of $W$. 

Suppose that the pair $(\mathbf{H},\kappa)$ is compatible with the group $G$. By Lemma \ref{lem:algebraic:representation} there exists an $U'$-algebraic representation $(\mathbf{J},\tau,\Psi)$ of $G \times \Omega$. By Proposition \ref{prop:j:non:trivial} we must have that $\mathbf{J}$ is trivial, otherwise $(\mathbf{H},\kappa)$ would be an incompatible pair. In this way one can conclude following the same strategy of the proof of Theorem \ref{teor:real:case}. This settles the case when $(\mathbf{H},\kappa)$ is compatible. 

Suppose now that $\kappa=\bbR$ and $\mathbf{H}(\kappa)=\textup{PU}(n,1)$. Let $(\mathbf{J},\tau,\Psi)$ be an $U'$-algebraic representation of $G \times \Omega$. By Lemma \ref{lem:same:map} we know that we can extend $\tau$ to a minimal parabolic subgroup $P$ mantaining the same map $\Psi$. 

If $\mathbf{J}$ is trivial we are done by the argument above. Thus we suppose that $\mathbf{J}$ is not trivial. We must have that $\tau:P \rightarrow (N_{\mathbf{H}}(\mathbf{J})/\mathbf{J})(\kappa)$ is incompatible in the sense of \cite{BFMS:Unpub}. By \cite[Proposition 3.4]{BFMS:Unpub} we have that $\mathbf{J}$ is the center of the unipotent radical of a minimal parabolic subgroup and $\ker(\tau)$ is one-dimensional. Thus we are in the situation of Proposition \ref{prop:rigidity:boundary:map} and the statement follows. 
\end{proof}

\section{Existence of the measurable map}\label{sec:existence:map}

Let $G$ be either $\textup{SO}^\circ(n,1)$, with $n \geq 3$, or $\textup{SU}(n,1)$, with $n \geq 2$. Consider a lattice $\Gamma < G$. So far we have seen how to trivialize cocycles assuming Zariski density and the existence of an equivariant family of projective measures. In this section we are going to provide a partial existence result for such equivariant family of measures when the lattice $\Gamma$ admits an infinite family of homogeneous $W$-invariant probability measure $(\mu_i)_{i \in \bbN}$ on $G/\Gamma$ converging to the Haar measure in the weak-$^\ast$ topology, for some $W<G$ simple non-compact subgroup. Notice in particular that the lattice must be arithmetic by the main results in \cite{BFMS21,BFMS:Unpub}. 

By \cite[Proposition 3.1]{BFMS21},\cite[Proposition 8.1]{BFMS:Unpub} we know that the existence of a sequence of probability measures $(\mu_i)_{i \in \bbN}$ as above is equivalent to the existence of an infinite family of totally geodesic submanifolds of dimension at least $2$ in the double quotient $K \backslash G /\Gamma$, where $K<G$ is a maximal compact subgroup. In the complex case we are going to assume that the submanifolds are all either real or complex, up to extracting a subsequence. By the characterization of totally geodesic submanifolds given by \cite[Lemma 3.2]{BFMS21}, \cite[Proposition 8.2]{BFMS:Unpub}, we know that there must exist a standard subgroup $W<G$ with normalizer $N=N_G(W)$, elements $g_i \in G$ and subgroups $g_iWg_i^{-1} \leq S_i \leq g_i N g_i^{-1}$ 
such that
$$
\Gamma_i:=S_i \cap \Gamma \ 
$$
is a lattice in $S_i$ and each totally geodesic submanifold has the form $S_i/\Gamma_i$.  

\begin{prop}\label{prop:equivariant:map:arithmetic}
Let $G$ be either $\textup{SO}^\circ(n,1)$, with $n\geq 3$, or $\textup{SU}(n,1)$, with $n \geq 2$. Let $\Gamma < G$ be a lattice and let $(\Omega,\mu)$ be an ergodic standard Borel probability $\Gamma$-space. Suppose that that there exist a simple non-compact $W<G$ and an infinite sequence $(\mu_i)_{i \in \bbN}$ of $W$-invariant $W$-homogeneous measures such that the Haar measure on $G/\Gamma$ is a weak-$^\ast$ limit of the sequence. Suppose that $\kappa$ is a local field and $\mathbf{H}$ is a simple connected adjoint algebraic $\kappa$-group. Denote by $H=\mathbf{H}(\kappa)$ the $\kappa$-points of $\mathbf{H}$. Consider a measurable cocycle $\sigma:\Gamma \times \Omega \rightarrow H$ so that $\sigma|_{\Gamma_i}$ is cohomologous to a cocycle whose image is contained in a proper subgroup of $\mathbf{H}$, for infinitely many $i \in \bbN$. Then there exist an irreducible representations of $\mathbf{H}$ on a $\kappa$-vector space $V$ and a probability-valued measurable map $\Phi:G \times \Omega \rightarrow \mathcal{M}^1(\mathbb{P}(V))$ which is $\sigma$-equivariant and $W$-invariant with respect to the first variable.
\end{prop}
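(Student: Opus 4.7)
The plan is to adapt the strategy of \cite[Proposition 3.1]{BFMS21} and \cite[Proposition 8.1]{BFMS:Unpub}, which treat the representation case, by replacing the ``Zariski closure in a proper subgroup'' hypothesis with the assumption that $\sigma|_{\Gamma_i}$ be cohomologous to a cocycle landing in a proper algebraic subgroup. For each of the infinitely many admissible indices $i$, I would construct a partial measure-valued equivariant map supported on (the lift of) $\mu_i$, and then extract a weak-$^\ast$ limit as the $\mu_i$ accumulate on the Haar measure of $G/\Gamma$.

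First, a uniformization via Chevalley's theorem. Fix, for each admissible $i$, a measurable map $c_i:\Omega\to H$ and a proper algebraic $\kappa$-subgroup $\mathbf{H}_i<\mathbf{H}$ with $c_i(\gamma.s)^{-1}\sigma(\gamma,s)c_i(s)\in\mathbf{H}_i(\kappa)$ for every $\gamma\in\Gamma_i$ and almost every $s\in\Omega$. Since $\mathbf{H}$ has only countably many conjugacy classes of maximal proper algebraic subgroups, after passing to a subsequence I may assume that all $\mathbf{H}_i$ are $\mathbf{H}$-conjugate to a single subgroup $\mathbf{H}_0$; absorbing the conjugator into $c_i$, Chevalley's theorem then supplies a single irreducible $\kappa$-representation $V$ of $\mathbf{H}$ together with a line $[v]\in\mathbb{P}(V)$ fixed by $\mathbf{H}_0$. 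The map $f_i:\Omega\to\mathcal{M}^1(\mathbb{P}(V))$, $f_i(s):=c_i(s)_\ast\delta_{[v]}$, is then $\sigma|_{\Gamma_i}$-equivariant.

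Next, I would induce $f_i$ across $G$. Writing $W_i:=g_i^{-1}Wg_i\leq S_i$ and using that $\Gamma_i<S_i$ is a lattice, one promotes $f_i$ to a measurable map $\Phi_i:G\times\Omega\to\mathcal{M}^1(\mathbb{P}(V))$ which is $\sigma$-equivariant and which, on the piece of $G/\Gamma$ supporting $\mu_i$, is $W_i$-invariant in the first variable (extending arbitrarily outside). Form the Radon probability measures $\nu_i:=(\mathrm{pr},\Phi_i)_\ast(\mu_i\otimes\mu)$ on the Radon space $G/\Gamma\times\Omega\times\mathcal{M}^1(\mathbb{P}(V))$, where the last factor is weak-$^\ast$ compact. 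Tightness being automatic, extract a subsequential weak-$^\ast$ limit $\nu$; since $\mu_i\to\mathrm{Haar}$ by hypothesis, the $(G/\Gamma\times\Omega)$-marginal of $\nu$ is the product of Haar and $\mu$. Disintegrating $\nu$ along this marginal and taking barycentres then produces the desired $\Phi:G\times\Omega\to\mathcal{M}^1(\mathbb{P}(V))$. Both $\sigma$-equivariance and $W$-invariance of the $\Phi_i$ translate into invariance conditions on the $\nu_i$ under continuous group actions on the ambient Radon space, and such conditions are preserved under weak-$^\ast$ limits; the $W$-invariance of the limit in particular is inherited from the $W_i$-invariance of $\mu_i$ after noting that $W_i\to W$ in an appropriate sense as $\mu_i\to\mathrm{Haar}$ (or, more simply, by conjugating the $\Phi_i$ back by $g_i$ at the outset so that $W_i=W$ for every $i$).

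The chief obstacle I foresee is arranging the Chevalley uniformization in a measurable fashion: the reduction to a single conjugacy class of $\mathbf{H}_i$ and the transfer of the $c_i$ so that a fixed line $[v]$ works for every $i$ requires a measurable selection of conjugators, and must be done without sacrificing the cocycle identity. A secondary technical point is guaranteeing that the weak-$^\ast$ limit $\Phi$ is genuinely probability-valued on a conull set rather than sub-probability-valued; here the Radon hypothesis on $\Omega$, the compactness of $\mathcal{M}^1(\mathbb{P}(V))$, and the normalization $\mu_i(G/\Gamma)=1$ combine to rule out escape of mass in the fibre direction.
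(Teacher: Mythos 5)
The second half of your argument (push forward over $\mu_i\otimes\mu$, use compactness of $\mathcal{M}^1(\mathbb{P}(V))$ to extract a weak-$^\ast$ limit, observe that the marginal on $G/\Gamma\times\Omega$ is Haar${}\otimes\mu$, then disintegrate using the Radon hypothesis and read off equivariance and $W$-invariance from invariance of the limit measure) is essentially the same as the paper's. The genuine gap is in your first step, the ``Chevalley uniformization''. Chevalley's theorem produces a representation in which $\mathbf{H}_0$ is the stabilizer of a line, but that representation is \emph{not} irreducible in general, and the proposition demands an irreducible $V$; you never explain how to pass to an irreducible summand uniformly in $i$ while keeping a line that is actually moved by $\mathbf{H}$. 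Worse, the reduction of the subgroups $\mathbf{H}_i$ to a single conjugacy class is not justified: the $\mathbf{H}_i$ need not be maximal (so you must first enlarge them), ``countably many conjugacy classes of maximal proper algebraic subgroups'' is a nontrivial claim over an arbitrary local field $\kappa$ (geometric conjugacy classes split into $\kappa$-classes governed by Galois cohomology, which can misbehave in positive characteristic), and absorbing the conjugator into $c_i$ requires the conjugating element to lie in $H=\mathbf{H}(\kappa)$, i.e.\ genuine $\kappa$-rational conjugacy, which $\overline{\kappa}$-conjugacy does not provide. As it stands this uniformization is an unproved assertion on which the whole construction of the maps $f_i$ rests.

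The paper sidesteps all of this with the standard exterior-power trick: after passing to a subsequence with $\dim\mathbf{L}_i=m$ constant, each Lie algebra $\mathfrak{l}_i$ gives a line $\ell_i\subset\Lambda^m\mathfrak{h}$ in the \emph{fixed} representation $\Lambda^m\mathrm{Ad}$ of $\mathbf{H}$; simplicity and adjointness of $\mathbf{H}$ guarantee $\ell_i$ is not $\mathbf{H}$-invariant, and a further subsequence makes $\ell_i$ project nontrivially onto one fixed irreducible summand $V$. Since $\mathrm{Stab}(\ell_i)$ contains $\mathbf{L}_i(\kappa)$, the cohomology of $\sigma|_{\Gamma_i}$ to a cocycle into $\mathbf{L}_i(\kappa)$ immediately yields a $\sigma_i$-equivariant map $\Omega\to\mathbb{P}(V)$ with no conjugacy-class bookkeeping and no appeal to Chevalley. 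If you want to salvage your route you would need to either prove the $\kappa$-rational uniformization claim (including the positive-characteristic case allowed by the statement) or, more simply, replace that entire step by the adjoint-representation argument; the rest of your proof would then go through essentially as in the paper.
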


\begin{proof}
By assumptions the cocycle $\sigma_i:=\sigma|_{\Gamma_i}$ is cohomologous to a cocycle whose image is contained in a proper subgroup $\mathbf{L}_i<\mathbf{H}$. We are allowed to pass to a subsequence and to suppose that $\dim(\mathbf{L}_i)=m$ is constant. 

We consider the $m$-th exterior power of the adjoint representation, more precisely $\wedge^m\textup{Ad}:H \rightarrow \textup{GL}(\wedge^m \mathfrak{h})$, where $\mathfrak{h}=\textup{Lie}(H)$ is the Lie algebra associated to $H$. In a similar way we define $\mathfrak{l}_i=\textup{Lie}(\mathbf{L}_i(\kappa))$ the Lie algebra of $\mathbf{L}_i(\kappa)$. Since we supposed that $\dim(\mathbf{L}_i)=m$, we have that $\mathfrak{l}_i$ determines a line $\ell_i$ in $\wedge^m \mathfrak{h}$. Additionally the line $\ell_i$ cannot be $H$-invariant by the properness of the normalizer of $\mathbf{L}_i$ (being $\mathbf{H}$ $\kappa$-simple). Up to extracting another subsequence, we can suppose that the line $\ell_i$ projects non-trivially on a fixed irreducible summand $V$ of the representation $\wedge^m \textup{Ad}$. Notice that the stabilizer of $\ell_i$ contains $\mathbf{L}_i(\kappa)$ by construction, so it contains the image of a cocycle cohomologous to $\sigma_i$. Equivalently there exists a measurable function $\mathscr{L}_i:\Omega \rightarrow \bbP(V)$ which is $\sigma_i$-equivariant, that is $\mathscr{L}_i(\gamma.s)=\sigma_i(\gamma,s)\mathscr{L}_i(s)$ for every $\gamma \in \Gamma_i$ and almost every $s \in \Omega$.

Consider now the product $G \times \Omega \times \bbP(V)$ with the $\Gamma_i$-action given by
$$
\gamma.(g,s,\ell)=(\gamma g,\gamma.s,\sigma_i(\gamma,s)\ell) \ .
$$
The existence of a measurable $\sigma_i$-equivariant map $\mathscr{L}_i:\Omega \rightarrow \bbP(V)$ implies the existence of a measurable section
$$
\lambda_i:S_i/\Gamma_i \times \Omega  \rightarrow (G \times \Omega \times \bbP(V))/\Gamma_i \ .
$$
Here we retained the same notation we used at the beginning of the section.

We can set  $\nu_i:=(\lambda_i)_\ast (\mu_i \otimes \mu)$. We consider $\nu$ an ergodic component of the weak-$\ast$ limit of the sequence $(\nu_i)_{i \in \bbN}$ (the existence of such limit is guaranteed by the Banach-Alaoglu Theorem). Since weak-$^\ast$ limit and projections commute, the weak-$^\ast$ convergence of $\mu_i$ to the Haar measure $\mu_G$ on $G/\Gamma$ implies that $\nu$ projects to the product measure $\mu_G \otimes \mu$ on $G/\Gamma \times \Omega$. 

Now we can disintegrate $\nu$ using \cite[Theorem 2.1]{Hah78}. We obtain a measurable map 
$$
\varphi:\Omega \rightarrow \mathcal{M}^1(G \times \bbP(V)) \ .
$$
For almost every $s \in \Omega$, we can disintegrate again the measure $\varphi(s)$ obtaining a measurable map 
$$
\Phi_s:G \rightarrow \mathcal{M}^1(\bbP(V)) \ .
$$
Collecting together all the measurable maps $(\Phi_s)_{s \in \Omega}$, we obtain a measurable map 
$$
\Phi: G \times \Omega \rightarrow \mathcal{M}^1(\bbP(V)) \ .
$$
Additionally the $\Gamma$-invariance of the measure $\nu$ and the uniqueness of the disintegration, implies that $\Phi$ is $\sigma$-equivariant. The $W$-invariance of $\nu$ tells us that $\Phi$ is $W$-invariant on the first variable. This proves the statement and concludes the proof. 
\end{proof}

\section{Final remarks and comments}\label{sec:comments:remarks}

We want to conclude this short manuscript trying to relate our results with the current literature about superrigidity of measurable cocycle for hyperbolic lattices. 

We consider the complex case. Given a measurable cocycle $\Gamma \times \Omega \rightarrow \textup{PU}(n,1)$, where $\Gamma<\textup{SU}(n,1)$ with $n\geq2$, we know by \cite[Theorem 1.5]{moraschini:savini:2} that if the cocycle is maximal it has to be trivializable. Theorem \ref{teor:complex:case} offers us another point of view. In fact, if we forget about maximality and we assume Zariski density, the existence of an equivariant family of projective measures (for instance as in Proposition \ref{prop:equivariant:map:arithmetic}) implies the trivializability of the cocycle. Notice that, \emph{a posteriori}, maximal cocycles are Zariski dense, so in principle Theorem \ref{teor:complex:case} cover a wider family of cases than maximal cocycles (when the target is $\textup{PU}(n,1)$). More generally, we believe that Theorem \ref{teor:complex:case} gives a contribution in understanding which Zariski dense cocycles are trivializable if we do not want to invoke maximality to trivialize them.  

Theorems \ref{teor:real:case} and \ref{teor:complex:case} help also to understand better how to weaken the hypothesis of the results by Fisher-Hitchman \cite{fisher:hitchman} and by Bader-Furman-Sauer \cite{sauer:articolo}, where some integrability assumptions are required to get the desired superrigidity statements.

\bibliographystyle{amsalpha}

\bibliography{biblionote}

\end{document}